\theoremstyle{plain}
\newtheorem{thm}{Theorem}
\newtheorem{cor}{Corollary}
\newtheorem{lem}{Lemma}
\newtheorem{tab}{Table} 
\renewcommand\section{\@startsection {section}{1}{\z@}
{-30pt \@plus -1ex \@minus -.2ex}
{2.3ex \@plus.2ex}
{\normalfont\normalsize\bfseries}}
\renewcommand\subsection{\@startsection{subsection}{2}{\z@}
{-3.25ex\@plus -1ex \@minus -.2ex}
{1.5ex \@plus .2ex}
{\normalfont\normalsize\bfseries}}
\renewcommand{\@seccntformat}[1]{\csname the#1\endcsname. }
\providecommand{\tabularnewline}{\\}
\newcommand{\noun}[1]{\textsc{#1}}
\providecommand{\tabularnewline}{\\}
\begin{document}
\begin{center}
\uppercase{\bf Derived Ramanujan Primes: $R'_{n}$}
\vskip 20pt
{\bf M. Baris Paksoy}\\
{\it{Humboldt Universit\"{a}t zu Berlin, Berlin, Germany}}\\
{\tt 
\href{mailto:baris.paksoy@gmail.com}{baris.paksoy@gmail.com}}\\ 
\end{center}
\vskip 40pt
\lhead{Derived Ramanujan Primes: $R'_{n}$} \chead{} \rhead{M. Baris
Paksoy}

\selectlanguage{english}%

\centerline{\bf Abstract}
\noindent
In this article, we study the Ramanujan-prime-counting function $\pi_{R}(x)$
along the lines of Ramanujan's original work on Bertrand's Postulate.
We show that the number of Ramanujan primes $R_{n}$ between $x$
and $2x$ tends to infinity with $x$. This analysis leads us to define
a new sequence of prime numbers, which we call derived Ramanujan primes
$R'_{n}$. For $n\geq1$ we define the $n$th derived Ramanujan prime
as the smallest positive integer $R_{n}'$ with the property that if $x\geq R_{n}'$
then $\pi_{R}\left(x\right)-\pi_{R}\left(\frac{x}{2}\right)\geq n$. 
As an application of the existence of derived Ramanujan primes, we prove analogues
for Ramanujan primes of Richert’s Theorem and Greenfield’s Theorem for primes.
We give some new inequalities for both the prime-counting
function $\pi(x)$ and for $\pi_{R}(x)$. Following the recent works
of Sondow and Laishram on the bounds of Ramanujan primes, we analyze
the bounds of derived Ramanujan primes. Finally, we give another proof
of the theorem of Amersi, Beckwith, Miller, Ronan and Sondow, which
states that if $c\in\left(0,1\right)$, then the number of primes
in the interval $\left(cx,x\right)$ tends to infinity with $x$. 
\thispagestyle{empty} 
\baselineskip=12.875pt 
\vskip 40pt
$\hphantom{}$

\subsubsection*{1. Introduction}

\paragraph{\textmd{In 1919 Srinivasa Ramanujan \cite{7} gave an elegant proof
of Bertrand's Postulate, which states that there exists a prime number
between $n$ and $2n$ for all $n\geq2$. In the process he showed
the existence of a certain sequence of prime numbers, now known as
Ramanujan primes. Recall that $\pi\left(x\right)$ is the prime counting
function, that is, $\pi\left(x\right)$ is the number of primes less
than or equal to $x$. In 2009 Jonathan Sondow gave the following
definition in \cite{15}:}\\\\}

\emph{For $n\geq1$, the $n$th Ramanujan prime is the smallest
positive integer R$_{n}$ with the property that if $x\geq R_{n}$,
then $\pi\left(x\right)-\pi\left(\frac{x}{2}\right)\geq n$.}

\paragraph{\textmd{As an example, if }$n=1,\mbox{ }2,\mbox{ }3,\mbox{ }4,\mbox{ }5,\mbox{ }6,\ldots,$
\textmd{then the $n$th Ramanujan prime}
$R_{n}=2,\mbox{ }11,\mbox{ }17,\mbox{ }29,\mbox{ }41,\mbox{ }47,\ldots$ 
$\left(\mbox{\href{http://oeis.org/A104272}{A104272} \textmd{in \cite{14}}}\right)$\textmd{.
After that he proved that the $n$th Ramanujan prime $R_{n}$ lies between
the $2n$th and $4n$th prime for all $n\geq2$. He also showed that
$R_{n}\thicksim p_{2n}$ as $n\to\infty$, and that for every $\varepsilon>0$, there
exists $N_{0}\left(\varepsilon\right)$ such that $R_{n}<\left(2+\varepsilon\right)n\ln n$
for $n\geq N_{0}\left(\varepsilon\right)$. Shanta Laishram in \cite{5}
improved Sondow's result by showing that the $n$th Ramanujan prime
does not exceed the $3n$th prime. In Theorem 1 of \cite{5} Laishram
also gave a method to calculate $N_{0}\left(\varepsilon\right)$.
Following these theorems, we} \textmd{{\it denote by $\pi_{R}\left(x\right)$
the number of Ramanujan primes which do not exceed} $x$ and we show the
existence of derived Ramanujan primes $R'_{n}$ with the similar definition:}\\\\}

\textit{For $n\geq1$, the $n$th derived Ramanujan prime
is the smallest positive integer $R_{n}'$ with the property that if $x\geq R_{n}',$
then $\pi_{R}\left(x\right)-\pi_{R}\left(\frac{x}{2}\right)\geq n$.
In other words, there holds}

\textit{
\[
\pi_{R}\left(x\right)-\pi_{R}\left(\frac{x}{2}\right)\geq1,\,2,\,3,\,4,\,5,\ldots,\: if\: x\geq11,\,41,\,59,\,97,\,149\ldots.
\]
$^{\left(\mbox{\href{http://oeis.org/A192820}{A192820} \cite{14}}\right)}$}

\paragraph{\textmd{Note that the derived Ramanujan primes denoted  in ${\left(\mbox{\href{http://oeis.org/A192820}{A192820} \cite{14}}\right)}$ as 2-Ramanujan primes. The existence of $R'_{n}$ also means that the number of
Ramanujan primes between $x$ and $2x$ tends to infinity with $x$.
This proof makes it possible to give some applications to Ramanujan
primes of Bertrand's Postulate, Richert's Theorem \cite{8} and Greenfield's
Theorem \cite{4} on primes. After that we extend Rosser and Schoenfeld's
Theorem }$2\pi\left(x\right)\geq\pi\left(2x\right)$\textmd{ to the
Ramanujan-prime-counting function $\pi_{R}\left(x\right)$ by proving that }$2\pi_{R}\left(x\right)\geq\pi_{R}\left(2x\right)$\textmd{,
with the help of Segal's idea \cite{11}. This makes it possible to prove that
the $n$th derived Ramanujan prime lies between the $2n$th Ramanujan
prime and the $3n$th Ramanujan prime, and also that $R_{n}'\sim R_{2n}\thicksim p_{4n}$.
In \cite{16} J. Sondow, J. W. Nicholson and T. D. Noe made an analysis
of bounds and runs of Ramanujan primes and showed that if an upper
twin prime is Ramanujan, then so is the lower. In \cite{12} V. Shevelev
studied some parallel properties of Ramanujan primes and Labos primes
and gave generalizations with the construction of two kinds of sieves
for them. Recently, N. Amersi, O. Beckwith, S. J. Miller, R. Ronan
and J. Sondow \cite{1} gave another generalization of Ramanujan primes which states that for any $c\in(0,1)$, the $n$th $c$-Ramanujan
prime can be defined as the smallest integer $R_{c,n}$ such that
for all $x\geq R_{c,n}$, there are at least $n$ primes in the interval
$\left(cx,x\right]$. They also showed that $R_{c,n}\sim p_{\frac{n}{1-c}}$
as $n$ tends to infinity. In the last section we give another proof
of the existence of $c$-Ramanujan primes.}}

\vskip 3pt

\subsubsection*{2. Derived Ramanujan Primes and Two Applications}

$\hphantom{}$

We begin this section with a useful corollary of a theorem of Sondow. Then we show the existence of derived Ramanujan primes and analogues of Richert’s Theorem and Greenfield’s Theorem for Ramanujan primes.

\begin{thm}\label{t1}
\textbf{{\rm (Sondow \cite{15})} }
For every $\varepsilon>0$, there exists $n_{0}=n_{0}\left(\varepsilon\right)$
such that 
\[
R_{n}<\left(2+\varepsilon\right)n\ln n\qquad\left(n\geq n_{0}\right).
\]
\end{thm}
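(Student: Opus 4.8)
The plan is to deduce the bound from the asymptotic relation $R_{n}\sim p_{2n}$ together with a classical effective upper estimate for the $k$th prime. Since $R_{n}\sim p_{2n}$ as $n\to\infty$ (already recorded in the Introduction), it suffices to show that $p_{2n}\sim 2n\ln n$: then $R_{n}\sim 2n\ln n$, i.e. $R_{n}/(n\ln n)\to 2$, so for every fixed $\varepsilon>0$ one has $R_{n}<(2+\varepsilon)n\ln n$ for all $n\geq n_{0}(\varepsilon)$.

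For the prime estimate I would invoke the Rosser--Schoenfeld inequality $p_{k}<k(\ln k+\ln\ln k)$, valid for $k\geq 6$. Taking $k=2n$ (so $n\geq 3$) gives
\[
p_{2n}<2n\bigl(\ln(2n)+\ln\ln(2n)\bigr)=2n\ln n+2n\ln 2+2n\ln\ln(2n).
\]
Because $\ln 2+\ln\ln(2n)=o(\ln n)$, the right-hand side equals $2n\ln n\,(1+o(1))$, so $p_{2n}\sim 2n\ln n$; more precisely, for every fixed $\varepsilon>0$ the correction $2n\ln 2+2n\ln\ln(2n)$ is eventually smaller than $\varepsilon n\ln n$, hence $p_{2n}<(2+\varepsilon)n\ln n$ for $n$ large. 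Combined with $R_{n}=p_{2n}(1+o(1))$ this yields the theorem, and one can in principle extract an explicit $n_{0}(\varepsilon)$ from the effective form of $R_{n}\sim p_{2n}$ — which is precisely where Laishram's refinement enters.

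If one prefers a self-contained argument that does not quote $R_{n}\sim p_{2n}$, I would work straight from the definition of $R_{n}$: it is enough to prove that $\pi(x)-\pi(x/2)\geq n$ for every real $x\geq (2+\tfrac{\varepsilon}{2})n\ln n$ once $n$ is large, since then $R_{n}\leq\lceil (2+\tfrac{\varepsilon}{2})n\ln n\rceil<(2+\varepsilon)n\ln n$. Using the explicit Prime Number Theorem bounds $\pi(x)>\frac{x}{\ln x}$ and $\pi(x)<\frac{x}{\ln x}\bigl(1+\frac{c}{\ln x}\bigr)$ (with an absolute constant $c$, valid past a fixed point), one obtains
\[
\pi(x)-\pi\!\left(\tfrac{x}{2}\right)>\frac{x}{\ln x}-\frac{x}{2(\ln x-\ln 2)}\left(1+\frac{c}{\ln x-\ln 2}\right)=\frac{x}{2\ln x}\left(1-\frac{C}{\ln x}+o\!\left(\tfrac{1}{\ln x}\right)\right)
\]
for a suitable constant $C$. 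The \emph{main obstacle} is that $\pi(x)-\pi(x/2)$ is not monotone, so one must verify that the smooth lower bound on the right is increasing for all large $x$ (a routine derivative computation, the leading term $\tfrac{1}{2\ln x}$ being positive) in order to reduce the check to the single endpoint $x=(2+\tfrac{\varepsilon}{2})n\ln n$. There $\ln x=\ln n+\ln\ln n+O(1)$, so the right-hand side is $(1+\tfrac{\varepsilon}{4})n\,(1-o(1))$, which exceeds $n$ for $n$ large, completing the proof.
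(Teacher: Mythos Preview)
The paper does not actually supply a proof of \autoref{t1}; it is simply quoted as Sondow's result and then used as input to \autoref{c1}. So there is no ``paper's own proof'' to compare against.

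Your argument is nonetheless sound, and your first route is in fact the one Sondow uses in \cite{15}: he first proves $R_{n}\sim p_{2n}$ directly from the Prime Number Theorem (via $\pi(R_{n})-\pi(R_{n}/2)=n$ and $\pi(x)-\pi(x/2)\sim\pi(x)/2$, giving $\pi(R_{n})\sim 2n$), and then combines this with $p_{2n}\sim 2n\ln n$ to obtain $R_{n}\sim 2n\ln n$, from which the $(2+\varepsilon)n\ln n$ bound is immediate. So there is no circularity in invoking $R_{n}\sim p_{2n}$ here, since that asymptotic does not rely on the theorem you are proving. Your second, self-contained approach via explicit $\pi(x)-\pi(x/2)$ bounds is also correct; it is essentially the method the present paper later uses (see \autoref{l2} and \autoref{l4}) to obtain effective versions such as $R_{n}<\tfrac{8}{3}n\ln n$.
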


\begin{cor}\label{c1}
For all $n\geq N_{\varepsilon}$ and $x\geq R_{n}$, there hold the inequalities
\begin{equation}
\frac{\pi\left(x\right)}{2}>\pi_{R}\left(x\right)>\frac{\pi\left(x\right)}{2+\varepsilon}.\label{0}
\end{equation}
\end{cor}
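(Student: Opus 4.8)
The plan is to reduce both estimates to properties of the integer $m:=\pi_{R}(x)$. Since the Ramanujan primes form a strictly increasing sequence of primes, $\pi_{R}(x)=m$ is equivalent to $R_{m}\le x<R_{m+1}$; in particular $x\ge R_{n}$ forces $m\ge n$, so once we take $N_{\varepsilon}\ge 2$ we automatically have $m\ge 2$ throughout. I will prove the two inequalities separately and then choose $N_{\varepsilon}$ large enough to make both work.

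For the left-hand inequality $\pi_{R}(x)<\pi(x)/2$, I would use only Sondow's bound $p_{2m}<R_{m}$ (valid for $m\ge 2$). Because $R_{m}$ is a prime strictly larger than $p_{2m}$, it is at least the next prime $p_{2m+1}$, and since $x\ge R_{m}$ we get $\pi(x)\ge\pi(R_{m})\ge 2m+1$. Hence $\pi(x)/2\ge m+\tfrac12>m=\pi_{R}(x)$, which is the desired strict inequality; this half requires no asymptotics and no restriction beyond $m\ge 2$.

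For the right-hand inequality $\pi_{R}(x)>\pi(x)/(2+\varepsilon)$, note that $x<R_{m+1}$ with $R_{m+1}$ prime gives $\pi(x)\le\pi(R_{m+1})-1<\pi(R_{m+1})$, so it suffices to show $\pi(R_{m+1})\le (2+\varepsilon)m$ for all large $m$. Here I would apply \autoref{t1} with $\varepsilon/2$ in place of $\varepsilon$ to obtain $R_{m+1}<(2+\tfrac{\varepsilon}{2})(m+1)\ln(m+1)$ once $m+1\ge n_{0}(\varepsilon/2)$, combine it with a standard explicit upper bound of the form $\pi(t)<\tfrac{t}{\ln t}\bigl(1+\tfrac{c}{\ln t}\bigr)$, and use the crude lower bound $\ln R_{m+1}>\ln(m+1)$ (which also follows from $p_{2(m+1)}<R_{m+1}$). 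These give $\pi(R_{m+1})<(2+\tfrac{\varepsilon}{2})(m+1)\bigl(1+\tfrac{c}{\ln(m+1)}\bigr)$, and since $\bigl(1+\tfrac1m\bigr)\bigl(1+\tfrac{c}{\ln(m+1)}\bigr)\to 1$, the right-hand side drops below $(2+\varepsilon)m$ for every sufficiently large $m$.

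It then remains to set $N_{\varepsilon}$ equal to the maximum of $2$, of $n_{0}(\varepsilon/2)$, and of the explicit threshold produced in the last step, so that every $m\ge N_{\varepsilon}$ satisfies all the inequalities used above; as noted, $x\ge R_{n}$ with $n\ge N_{\varepsilon}$ forces $m=\pi_{R}(x)\ge N_{\varepsilon}$, and both bounds follow. The only delicate point is this last calibration: one must pick the internal tolerance fed into \autoref{t1} (here $\varepsilon/2$) small enough, and $N_{\varepsilon}$ large enough, that the loss incurred by replacing $m$ with $m+1$ and by the secondary term $c/\ln(m+1)$ in the $\pi$-bound is swallowed by the margin between $2+\varepsilon/2$ and $2+\varepsilon$. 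Everything else is routine bookkeeping with known explicit estimates for $\pi(t)$ and $p_{k}$.
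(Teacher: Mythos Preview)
Your proposal is correct and follows essentially the same route as the paper: both set $m=\pi_{R}(x)$, use Sondow's bound $R_{m}>p_{2m}$ for the left inequality, and for the right inequality bound $\pi(x)$ by $\pi(R_{m+1})$, then control $R_{m+1}$ via \autoref{t1} together with a Dusart-type upper bound for $\pi$. The only cosmetic difference is how the slack is manufactured: you feed $\varepsilon/2$ into \autoref{t1} and absorb the $(1+1/m)(1+c/\ln(m+1))$ loss into the gap between $2+\varepsilon/2$ and $2+\varepsilon$, whereas the paper keeps the full $\varepsilon$ in \autoref{t1} and instead exploits explicitly that $\ln k=\ln\bigl((2+\varepsilon)(n+1)\ln(n+1)\bigr)$ exceeds $\ln(n+1)$ by enough to beat the Dusart secondary term, carrying out the arithmetic in \eqref{2}--\eqref{7}.
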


\begin{proof}
By Sondow's inequality $R_{n}>p_{2n}$ for $n>1$, the left
side of \eqref{0} must hold because if $\pi_{R}\left(x\right)=n$,
then $\pi\left(x\right)$ must be greater than $2n$. Now we will prove
right side of \eqref{0}. Let $R_{n+1}>x\geq R_{n}$, that is, $\pi_{R}\left(x\right)=n$. It is enough to show that
$\left(2+\varepsilon\right)n>\pi\left(x\right)$. By \autoref{t1} it follows that
\begin{equation}
\pi\left(x\right)<\pi\left(R_{n+1}\right)\leq\pi\left(\left(2+\varepsilon\right)\left(n+1\right)\ln\left(n+1\right)\right).\label{1}
\end{equation}
Now take $\left(2+\varepsilon\right)\left(n+1\right)\ln\left(n+1\right)=k$. For every $\varepsilon$ and $n\geq10$ it is easy to see that the inequality
\begin{equation}
\ln\left(n+1\right)<n\left(\ln\left(k\right)-\ln\left(n+1\right)-1.2762\right)\label{2}
\end{equation}
holds. Hence
\begin{equation}
1.2762n<n\ln k -\left(n+1\right)\ln\left(n+1\right)\label{3}
\end{equation}
and
\begin{equation}
\left(1+\frac{1.2762n}{n\ln k}\right)\left(1-\frac{n\ln\left(k\right)-\left(n+1\right)\ln\left(n+1\right)}{n\ln k}\right)<1. \label{4}
\end{equation}
One can check that \eqref{4} holds for $n\geq5$. As we have $1-\frac{n\ln\left(k\right)-\left(n+1\right)\ln\left(n+1\right)}{n\ln k}=\frac{\left(n+1\right)\ln\left(n+1\right)}{n\ln k}$  we get
\begin{equation}
\frac{k}{\ln k}\left(1+\frac{1.2762}{\ln k}\right)<\left(2+\varepsilon\right)n\label{5}
\end{equation}
and by Dusart's inequality \cite{3} for $x>1$
\begin{equation}
\pi\left(x\right)\leq\frac{x}{\ln x}\left(1+\frac{1.2762}{\ln x}\right)\label{6}
\end{equation}
and \eqref{1}, the inequalities 
\begin{equation}
\pi\left(x\right)<\pi\left(R_{n+1}\right)\leq\pi\left(k\right)<\left(2+\varepsilon\right)n\label{7}
\end{equation}
hold for $n\geq5$, and by computer check also for any $\varepsilon>0$, with $n\geq N_{\varepsilon}$ and {$x\geq R_{n}$}.
\end{proof}

\begin{thm}\label{t2}
There exists at least one Ramanujan prime between $\frac{x}{2}$ and $x$, for all $x\geq11$.
Moreover, the number of Ramanujan primes in the interval
$\left(\frac{x}{2},x\right]$, which is $\pi_{R}\left(x\right)-\pi_{R}\left(\frac{x}{2}\right)$,
tends to infinity with $x$.
\end{thm}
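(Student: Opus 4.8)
The plan is to reduce the statement about Ramanujan primes in $(x/2,x]$ to the analogous statement about ordinary primes, using the asymptotic control on $R_n$ from \autoref{t1} and its consequence \autoref{c1}. The first claim — that $\pi_R(x)-\pi_R(x/2)\geq 1$ for all $x\geq 11$ — is essentially a finite verification combined with a tail argument: one checks directly (or cites the OEIS data quoted in the introduction) that the inequality $\pi_R(x)-\pi_R(x/2)\geq 1$ first holds from $x=11$ onward, and that it cannot later fail because the second, stronger claim forces $\pi_R(x)-\pi_R(x/2)\to\infty$. So the real content is the divergence statement.

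For the divergence, I would argue as follows. Fix a large $x$ and let $n=\pi_R(x)$, $m=\pi_R(x/2)$, so that the quantity to be bounded below is $n-m$. By \autoref{c1}, for $x$ large enough (so that the relevant indices exceed $N_\varepsilon$) we have $\pi_R(x)>\frac{\pi(x)}{2+\varepsilon}$ and $\pi_R(x/2)<\frac{\pi(x/2)}{2}$, hence
\[
\pi_R(x)-\pi_R\!\left(\tfrac{x}{2}\right) > \frac{\pi(x)}{2+\varepsilon}-\frac{\pi(x/2)}{2}.
\]
Now I invoke the behaviour of $\pi$: by the prime number theorem $\pi(x)\sim x/\ln x$ and $\pi(x/2)\sim \tfrac12\,x/\ln x$, so the right-hand side is asymptotically $\left(\tfrac{1}{2+\varepsilon}-\tfrac14\right)\tfrac{x}{\ln x}$. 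Choosing $\varepsilon$ small enough that $\tfrac{1}{2+\varepsilon}>\tfrac14$ (any $\varepsilon<2$ works), this lower bound tends to $+\infty$ with $x$. Therefore $\pi_R(x)-\pi_R(x/2)\to\infty$, which is the assertion; and in particular the difference is eventually $\geq 1$, which together with the finite check handles the first claim for all $x\geq 11$.

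The step I expect to be the main obstacle is making the application of \autoref{c1} uniform and simultaneous at the two points $x$ and $x/2$: the corollary is stated for $x\geq R_n$ with $n\geq N_\varepsilon$, so I must make sure that when $x$ is large, both $x$ and $x/2$ exceed $R_{N_\varepsilon}$, which is a mild bookkeeping point but must be stated carefully. A secondary subtlety is that I am using the lower bound $\pi_R(x)>\pi(x)/(2+\varepsilon)$ at $x$ and the upper bound $\pi_R(x/2)<\pi(x/2)/2$ at $x/2$ — both directions of \eqref{0} are needed, at different points — so I should be explicit that \autoref{c1} supplies both. Once that is in place, the prime number theorem estimates for $\pi(x)$ and $\pi(x/2)$ close the argument, and the small-$x$ range is disposed of by the tabulated values of $R'_n$ (equivalently of $R_n$) already recorded in the introduction, which show the difference first reaches $1$ at $x=11$ and never drops back.
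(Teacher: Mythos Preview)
Your approach is correct and essentially identical to the paper's: both use the two sides of \autoref{c1} at $x$ and at $x/2$, reduce to estimates on $\pi$, and cover small $x$ by a finite check. The only difference is cosmetic---the paper fixes $\varepsilon=1$ and replaces the prime number theorem by Dusart's explicit bounds \eqref{8}, obtaining the concrete lower bound $\pi_R(x)-\pi_R(x/2)>\tfrac{x}{\ln x}\bigl(\tfrac{1}{12}-\tfrac{0.3}{\ln x}\bigr)$ for $x\geq 599$ and then disposing of $11\leq x\leq 599$ via the doubling chain $11,17,29,47,71,127,241,461$.
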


\begin{proof}
By P. Dusart's \cite{3} inequalities 
\begin{equation}
\frac{x}{\ln x}\left(1+\frac{1.2762}{\ln x}\right)\underset{x>1}{>}\pi\left(x\right)\underset{x\geq599}{\geq}\frac{x}{\ln x}\left(1+\frac{1}{\ln x}\right)\label{8}
\end{equation}
and \autoref{c1} we obtain for all $x\geq599$

\begin{equation}
\frac{x}{2\ln x}\left(1+\frac{1.2762}{\ln x}\right)>\frac{\pi\left(x\right)}{2}>\pi_{R}\left(x\right)\geq\frac{\pi\left(x\right)}{3}>\frac{x}{3\ln\left(x\right)}\left(1+\frac{1}{\ln\left(x\right)}\right).\label{9}
\end{equation}
Therefore the inequalities

\begin{equation}
\pi_{R}\left(x\right)-\pi_{R}\left(\frac{x}{2}\right)>\frac{x}{3\ln x}\left(1+\frac{1}{\ln x}\right)-\frac{x}{4\ln\frac{x}{2}}\left(1+\frac{1.2762}{\ln\frac{x}{2}}\right)\geq\frac{x}{\ln x}\left(\frac{1}{12}-\frac{0.3}{\ln x}\right)\label{10}
\end{equation}
hold for all $x\geq599$, where the right side of the last inequality tends
to infinity with $x$. To verify that there exists
at least one Ramanujan prime between \emph{$\frac{x}{2}$ }and\emph{
$x$} for $11\leq x\leq599$, it is enough to see that there exists
one of the Ramanujan primes $11$, $17$, $29$, $47$, $71$, $127$, $241$ and $461$
between $\frac{x}{2}$ and $x$. 
\end{proof}

$\hphantom{}$

Since $\pi_{R}\left(x\right)-\pi_{R}\left(\frac{x}{2}\right)$ is
greater than the monotone increasing function in \eqref{10}, the number of Ramanujan primes between $\frac{x}{2}$ and $x$ tends
to infinity with~$x$. As a result, derived Ramanujan primes exist.

$\hphantom{}$

\begin{center}
\begin{tabular}{|c|c|c|c|c|c|c|c|c|c|c|c|c|c|}
\cline{1-2} \cline{4-5} \cline{7-8} \cline{10-11} \cline{13-14} 
$n$  & $R_{n}'$  &  & $n$  & $R_{n}'$  &  & $n$  & $R_{n}'$  &  & $n$  & $R_{n}'$  &  & $n$  & $R_{n}'$\tabularnewline
\cline{1-2} \cline{4-5} \cline{7-8} \cline{10-11} \cline{13-14} 
1  & 11  &  & 11  & 263  &  & 21  & 599  &  & 31  & 1009  &  & 41  & 1373\tabularnewline
\cline{1-2} \cline{4-5} \cline{7-8} \cline{10-11} \cline{13-14} 
2  & 41  &  & 12  & 307  &  & 22  & 641  &  & 32  & 1019  &  & 42  & 1423\tabularnewline
\cline{1-2} \cline{4-5} \cline{7-8} \cline{10-11} \cline{13-14} 
3  & 59  &  & 13  & 367  &  & 23  & 643  &  & 33  & 1021  &  & 43  & 1427\tabularnewline
\cline{1-2} \cline{4-5} \cline{7-8} \cline{10-11} \cline{13-14} 
4  & 97  &  & 14  & 373  &  & 24  & 647  &  & 34  & 1031  &  & 44  & 1439\tabularnewline
\cline{1-2} \cline{4-5} \cline{7-8} \cline{10-11} \cline{13-14} 
5  & 149  &  & 15  & 401  &  & 25  & 653  &  & 35  & 1049  &  & 45  & 1481\tabularnewline
\cline{1-2} \cline{4-5} \cline{7-8} \cline{10-11} \cline{13-14} 
6  & 151  &  & 16  & 409  &  & 26  & 719  &  & 36  & 1051  &  & 46  & 1487\tabularnewline
\cline{1-2} \cline{4-5} \cline{7-8} \cline{10-11} \cline{13-14} 
7  & 227  &  & 17  & 569  &  & 27  & 751  &  & 37  & 1061  &  & 47  & 1549\tabularnewline
\cline{1-2} \cline{4-5} \cline{7-8} \cline{10-11} \cline{13-14} 
8  & 229  &  & 18  & 571  &  & 28  & 821  &  & 38  & 1063  &  & 48  & 1553\tabularnewline
\cline{1-2} \cline{4-5} \cline{7-8} \cline{10-11} \cline{13-14} 
9  & 233  &  & 19  & 587  &  & 29  & 937  &  & 39  & 1217  &  & 49  & 1559\tabularnewline
\cline{1-2} \cline{4-5} \cline{7-8} \cline{10-11} \cline{13-14} 
10  & 239  &  & 20  & 593  &  & 30  & 941  &  & 40  & 1367  &  & 50  & 1567\tabularnewline
\cline{1-2} \cline{4-5} \cline{7-8} \cline{10-11} \cline{13-14} 
\end{tabular}
\par\end{center}

\begin{tab}
\noun{The First 50 Derived Raman}\emph{\noun{ujan Primes}} 
\end{tab}

In 1948 Hans-Egon Richert \cite{8} proved that each natural number $n\geq7$
can be expressed as a sum of distinct primes. His method has been generalized by Sierpinski, who showed the following theorem.

$\hphantom{}$

\begin{thm}\label{t3}
\textbf{ {\rm (Sierpinski \cite{13})} }
Let $m_{1}$, $m_{2},\ldots$
be an infinite increasing sequence of natural numbers such that for
a certain natural number $k$ the inequality 
\begin{equation}
m_{i+1}\leq2m_{i}\mbox{ }\mbox{ for }\mbox{ }i>k\label{11}
\end{equation}
 holds. If there exists an integer $a\geq0$ and natural numbers
$r$ and $s_{r-1}\geq m_{k+r}$ such that each of the numbers 
\[
a+1\mbox{, }\quad a+2\mbox{, }\ldots\mbox{ , }\quad a+s_{r-1}
\]
 is the sum of different numbers of the sequence $m_{1}$, $m_{2},\ldots,
m_{k+r-1}$, then for $s{}_{r}=s_{r-1}+m_{k+r}$ each of the numbers
\[
a+1\mbox{, }\quad a+2\mbox{, }\ldots\mbox{ , }\quad a+s_{r}
\]
 is the sum of different numbers of the sequence $m_{1},m_{2},\ldots,
m_{k+r}$, and moreover $s_{r}\geq m_{k+r+1}$.
\end{thm}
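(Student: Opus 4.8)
The plan is to treat this statement as a single inductive step and to verify its two conclusions in turn: that every integer in $[a+1,\,a+s_r]$ is a sum of distinct terms of $m_1,\dots,m_{k+r}$, and that $s_r\ge m_{k+r+1}$. No base case is needed, since the statement is purely conditional; the base case is what one supplies (by direct computation) in each application when the theorem is iterated.

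First I would split $[a+1,\,a+s_r]$ into the ``old'' block $[a+1,\,a+s_{r-1}]$ and the ``new'' block $[a+s_{r-1}+1,\,a+s_r]$. On the old block there is nothing to prove: by hypothesis each such integer is already a sum of distinct terms of $m_1,\dots,m_{k+r-1}$, and these are in particular terms of $m_1,\dots,m_{k+r}$.

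For the new block I would use a shift argument. Fix $N$ with $a+s_{r-1}+1\le N\le a+s_r=a+s_{r-1}+m_{k+r}$ and put $M=N-m_{k+r}$. Then $a+s_{r-1}+1-m_{k+r}\le M\le a+s_{r-1}$, and the hypothesis $s_{r-1}\ge m_{k+r}$ forces $a+1\le M\le a+s_{r-1}$. Hence $M$ is a sum of distinct terms among $m_1,\dots,m_{k+r-1}$; since the sequence is increasing, $m_{k+r}$ is strictly larger than every such term, so $N=M+m_{k+r}$ is a sum of distinct terms among $m_1,\dots,m_{k+r}$. This proves the representation claim.

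Finally, using $s_{r-1}\ge m_{k+r}$ once more, $s_r=s_{r-1}+m_{k+r}\ge 2m_{k+r}$; and since $r\ge1$ we have $k+r>k$, so the standing hypothesis $m_{i+1}\le 2m_i$ (valid for $i>k$) applies with $i=k+r$ and gives $2m_{k+r}\ge m_{k+r+1}$. Therefore $s_r\ge m_{k+r+1}$, which is the ``moreover'' clause — and it is exactly the form of hypothesis needed to re-apply the theorem with $r$ replaced by $r+1$, which is why the statement is engineered this way. The argument is entirely elementary; the only point to notice — and there is no real obstacle beyond it — is that the assumption $s_{r-1}\ge m_{k+r}$ is precisely what keeps the shifted interval $[a+s_{r-1}+1-m_{k+r},\,a+s_{r-1}]$ inside the already-covered interval $[a+1,\,a+s_{r-1}]$.
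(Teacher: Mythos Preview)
The paper does not actually prove \autoref{t3}; it is quoted from Sierpinski \cite{13} and used as a black box in the proof of \autoref{c2}. Your argument is correct and is precisely the standard proof: split the target interval at $a+s_{r-1}$, cover the upper block by the shift $N\mapsto N-m_{k+r}$ (which lands in the already-covered block because $s_{r-1}\ge m_{k+r}$), and obtain $s_r\ge m_{k+r+1}$ from $s_r\ge 2m_{k+r}$ together with the doubling hypothesis at $i=k+r>k$.
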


$\hphantom{}$

\begin{cor}\label{c2}
Each natural number $n\geq123$ can be
expressed as a sum of distinct Ramanujan primes.
\end{cor}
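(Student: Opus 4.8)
The strategy is to derive \autoref{c2} from Sierpiński's \autoref{t3}, applied to the sequence $m_{i}=R_{i}$ of Ramanujan primes, exactly as in the case of the ordinary primes, where Bertrand's Postulate supplies the doubling hypothesis. The one ingredient to supply is the doubling condition \eqref{11}. By \autoref{t2} there is a Ramanujan prime in $(x/2,x]$ for every $x\geq11$, so taking $x=2R_{n}$, which satisfies $2R_{n}\geq22>11$ for all $n\geq2$, produces a Ramanujan prime $p$ with $R_{n}<p\leq2R_{n}$, whence $R_{n+1}\leq2R_{n}$ for every $n\geq2$. Thus \eqref{11} holds with $k=1$; the single exception $R_{2}=11>2R_{1}=4$ is harmless, as it lies in the initial segment that \autoref{t3} explicitly tolerates.

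It remains to arrange the base case. Put $a=122$ and $k=1$, so that in \autoref{t3} the role of $m_{1},\ldots,m_{k+r-1}$ is played by $R_{1},\ldots,R_{r}$ and the role of $m_{k+r}$ by $R_{r+1}$. I would locate, by direct computation, an index $r$ for which every integer in the interval $[123,\,122+R_{r+1}]$ is a sum of distinct Ramanujan primes not exceeding $R_{r}$; for such an $r$ one may take $s_{r-1}=R_{r+1}$, which satisfies $s_{r-1}\geq m_{k+r}$, so the hypotheses of \autoref{t3} are met. (For orientation, $123=59+47+17$, whereas an integer like $191=101+71+17+2$ is \emph{not} a sum of distinct Ramanujan primes among the first ten, which shows that $r$ must exceed $10$; on the other hand $R_{1}+R_{2}+\cdots+R_{r}$ outgrows $122+R_{r+1}$ comfortably, so there is plenty of room.) With this finite check in hand, \autoref{t3} applies verbatim and runs the induction: from a block $\{a+1,\ldots,a+s_{j-1}\}$ representable by $R_{1},\ldots,R_{j}$ it produces the longer block $\{a+1,\ldots,a+s_{j}\}$ representable by $R_{1},\ldots,R_{j+1}$, with $s_{j}=s_{j-1}+R_{j+1}\geq R_{j+2}$, so the hypothesis is regenerated at the next stage. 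Iterating, every integer exceeding $a=122$ is a sum of distinct Ramanujan primes, which is the assertion of \autoref{c2}.

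I expect the base case — fixing a workable $r$ and certifying that no integer in $[123,\,122+R_{r+1}]$ fails to be a sum of distinct small Ramanujan primes — to be the real work; it is a finite computation, best carried out by machine. Two features make the window that must be checked noticeably longer than in Richert's original argument (and explain why the threshold here is $123$ rather than $7$): the Ramanujan primes are sparser and more irregular than the ordinary primes, and one lacks the analogue of the small primes $2$ and $3$ at the bottom of the list for cheaply patching small-residue gaps, since the second Ramanujan prime is already $11$. A modest amount of bookkeeping is also needed to choose $r$, and hence $s_{r-1}$, so that the requirements ``$s_{r-1}\geq R_{r+1}$'' and ``every integer from $123$ up to $122+s_{r-1}$ is representable'' hold simultaneously.
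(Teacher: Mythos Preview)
Your strategy matches the paper's exactly: apply Sierpiński's \autoref{t3} to $m_i=R_i$, obtain the doubling hypothesis \eqref{11} from \autoref{t2}, set $a=122$, and clear a finite base case by direct computation. The paper supplies that computation in \autoref{table2} (with parameters $k=0$, $r=10$, $s_9=97$, covering $123$ through $219$); your remark that one must reach past $R_{10}$ is in fact confirmed there, since the entries for $153$ and $191$ in \autoref{table2} already use $R_{11}=101$.
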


\begin{proof} Let $m_{i}=R_{i}$, $k=0$, $r=10$, $a=122$ and $s_{9}=97$.
There exists at least one Ramanujan prime between $x$ and $2x$ for $x\geq11$ by \autoref{t2}. So we get $R_{i}<R_{i+1}<2R_{i}$ for all natural numbers
$i\geq2$ and this implies the condition \eqref{11}. From \autoref{table2} it can be seen that
 each number from $123$ to $224$ is the sum of different Ramanujan
primes $R_{1}$, $R_{2}$, $\ldots$, $R_{9}$. So each
natural number greater than $123$ can be expressed as a sum of distinct Ramanujan primes.
\end{proof}

\begin{center}
\begin{tabular}{|c|c||c|c||c|c|}
\hline 
a$+$j  & Expression  & a$+$j  & Expression  & a$+$j  & Expression\tabularnewline
\hline 
\hline 
123  & $71+41+11$  & 157  & $71+67+17+2$  & 191  & $101+71+17+2$\tabularnewline
\hline 
124  & $67+29+17+11$  & 158  & $71+59+17+11$  & 192  & $71+67+41+11+2$\tabularnewline
\hline 
125  & $71+41+11+2$  & 159  & $71+59+29$  & 193  & $59+47+41+29+17$\tabularnewline
\hline 
126  & $67+59$  & 160  & $71+59+17+11+2$  & 194  & $71+59+47+17$\tabularnewline
\hline 
127  & $67+47+11+2$  & 161  & $71+59+29+2$  & 195  & $71+67+29+17+11$\tabularnewline
\hline 
128  & $71+29+17+11$  & 162  & $67+47+29+17+2$  & 196  & $71+67+47+11$\tabularnewline
\hline 
129  & $71+47+11$  & 163  & $59+47+29+17+11$  & 197  & $71+67+59$\tabularnewline
\hline 
130  & $71+59$  & 164  & $71+47+29+17$  & 198  & $71+67+47+11+2$\tabularnewline
\hline 
131  & $67+47+17$  & 165  & $59+47+29+17+11+2$  & 199  & $71+67+59+2$\tabularnewline
\hline 
132  & $71+59+2$  & 166  & $71+67+17+11$  & 200  & $71+59+41+29$\tabularnewline
\hline 
133  & $67+47+17+2$  & 167  & $71+67+29$  & 201  & $71+59+41+17+11+2$\tabularnewline
\hline 
134  & $59+47+17+11$  & 168  & $71+67+17+11+2$  & 202  & $71+67+47+17$\tabularnewline
\hline 
135  & $71+47+17$  & 169  & $71+67+29+2$  & 203  & $67+59+47+17+11+2$\tabularnewline
\hline 
136  & $59+47+17+11+2$  & 170  & $71+59+29+11$  & 204  & $71+67+47+17+2$\tabularnewline
\hline 
137  & $71+47+17+2$  & 171  & $71+59+41$  & 205  & $71+59+47+17+11$\tabularnewline
\hline 
138  & $71+67$  & 172  & $71+59+29+11+2$  & 206  & $71+59+47+29$\tabularnewline
\hline 
139  & $67+59+11+2$  & 173  & $71+59+41+2$  & 207  & $71+67+41+17+11$\tabularnewline
\hline 
140  & $71+67+2$  & 174  & $67+59+29+17+2$  & 208  & $71+67+59+11$\tabularnewline
\hline 
141  & $71+59+11$  & 175  & $67+59+47+2$  & 209  & $71+67+41+17+11+2$\tabularnewline
\hline 
142  & $71+41+17+11+2$  & 176  & $59+47+41+29$  & 210  & $71+67+41+29+2$\tabularnewline
\hline 
143  & $71+59+11+2$  & 177  & $59+47+41+17+11+2$  & 211  & $71+59+41+29+11$\tabularnewline
\hline 
144  & $67+47+17+11+2$  & 178  & $71+67+29+11$  & 212  & $67+47+41+29+17+11$\tabularnewline
\hline 
145  & $67+59+17+2$  & 179  & $71+67+41$  & 213  & $71+67+47+17+11$\tabularnewline
\hline 
146  & $71+47+17+11$  & 180  & $71+67+29+11+2$  & 214  & $71+67+59+17$\tabularnewline
\hline 
147  & $71+59+17$  & 181  & $71+67+41+2$  & 215  & $71+67+47+17+11+2$\tabularnewline
\hline 
148  & $71+47+17+11+2$  & 182  & $71+59+41+11$  & 216  & $71+67+59+17+2$\tabularnewline
\hline 
149  & $71+67+11$  & 183  & $67+59+29+17+11$  & 217  & $71+59+47+29+11$\tabularnewline
\hline 
150  & $67+41+29+11+2$  & 184  & $71+67+29+17$  & 218  & $71+59+47+41$\tabularnewline
\hline 
151  & $71+67+11+2$  & 185  & $71+67+47$  & 219  & $71+67+41+29+11$\tabularnewline
\hline 
152  & $71+41+29+11$  & 186  & $71+67+29+17+2$  & 220  & $71+59+47+41+2$\tabularnewline
\hline 
153  & $101+41+11$  & 187  & $71+67+47+2$  & 221  & $71+67+41+29+11+2$\tabularnewline
\hline 
154  & $71+41+29+11+2$  & 188  & $71+59+47+11$  & 222  & $97+71+41+11+2$\tabularnewline
\hline 
155  & $71+67+17$  & 189  & $71+59+29+17+11+2$  & 223  & $71+59+47+29+17$\tabularnewline
\hline 
156  & $67+59+17+11+2$  & 190  & $71+67+41+11$  & 224  & $67+59+41+29+17+11$\tabularnewline
\hline 
\end{tabular}
\par\end{center}

\begin{tab}
\noun{Expressıons of Natural Numbers between 123 and 224
as Sums of Different Ramanujan Primes\label{table2}} 
\end{tab}

In \cite{4}  L. Greenfield and S. Greenfield showed that the integers\emph{
}$\left\{ 1,\mbox{ }2,\mbox{ }\ldots,\mbox{ }2k\right\} $ can be
arranged in $k$ disjoint pairs such that the sum of the elements
in each pair is prime. Similar result can be shown for Ramanujan primes
with their method.

$\hphantom{}$

\begin{cor}\label{c3}
For all integers $k\geq17$ the numbers
$\left\{ 1,\mbox{ }2,\mbox{ }\ldots,\mbox{ }2k\right\} $ can be arranged
in $k$ disjoint pairs such that the sum of the elements in each pair
is a Ramanujan prime.
\end{cor}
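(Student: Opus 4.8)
The plan is to carry over the strong-induction argument of Greenfield and Greenfield, using the Bertrand-type statement of \autoref{t2} in place of Bertrand's Postulate; the only genuine new difficulty, absent for ordinary primes, is that the property fails for small $k$, so the recursion must be kept from ever descending below $k=17$. I would induct on $k\geq17$, taking as base cases the finitely many values $17\leq k\leq K_{0}$ for an explicit $K_{0}$ fixed below, each verified by a direct computer search exhibiting a valid pairing (in the spirit of \autoref{table2}).

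For the inductive step, suppose $k>K_{0}$ and that the claim holds for all $j$ with $17\leq j<k$. First I would locate a Ramanujan prime $R$ with $2k+35\leq R\leq 4k-1$. Since $R>2k\geq34$, the prime $R$ is odd, so $m:=R-2k$ is an odd integer with $35\leq m\leq 2k-1$; I would then pair the $4k-R+1$ integers $m,m+1,\ldots,2k$ (an even number of them) as $(m,2k),(m+1,2k-1),\ldots,\bigl(\tfrac{R-1}{2},\tfrac{R+1}{2}\bigr)$, obtaining $\tfrac{4k-R+1}{2}$ pairs each of sum $R$, a Ramanujan prime. What is left over is exactly $\{1,2,\ldots,m-1\}=\{1,\ldots,2k'\}$ with $k'=\tfrac{m-1}{2}$ and $17\leq k'\leq k-1$, so the inductive hypothesis finishes the argument.

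The point that needs the quantitative content of \autoref{t2} is the existence of $R$. Applying inequality \eqref{10} with $x=4k-1$ (legitimate once $4k-1\geq599$), the number of Ramanujan primes in $(2k,4k)$ is at least $\tfrac{4k-1}{\ln(4k-1)}\bigl(\tfrac{1}{12}-\tfrac{0.3}{\ln(4k-1)}\bigr)$; since the interval $(2k,2k+35)$ contains only $17$ odd integers, at most $17$ of those Ramanujan primes can avoid $[2k+35,4k-1]$, so it is enough that $\tfrac{4k-1}{\ln(4k-1)}\bigl(\tfrac{1}{12}-\tfrac{0.3}{\ln(4k-1)}\bigr)>18$, which holds for all $k$ past a moderate explicit $K_{0}$ (alternatively one can push the Dusart bounds through \autoref{c1}, since $\pi_{R}(x)$ is of order $\pi(x)$). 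I expect the main obstacle to be the resulting finite verification for $17\leq k\leq K_{0}$; the structural descent is then automatic, the one delicate design choice being the requirement $m\geq35$, which is exactly what prevents the residual block of size $m-1$ from ever dropping below $34$.
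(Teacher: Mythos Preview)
Your argument is correct and follows the same Greenfield--Greenfield descent as the paper: peel off a top block $\{m,\ldots,2k\}$ whose pairs all sum to a fixed Ramanujan prime $R\in(2k,4k)$, then recurse on $\{1,\ldots,m-1\}$. The paper, however, sharpens the base-case bookkeeping in a way you did not anticipate. Rather than insisting on $m\geq35$ (i.e.\ $k'\geq17$), the paper observes from \autoref{table3} that the pairing also exists for the sporadic values $k'\in\{5,6,8,9,11,12,14,15\}$; thus the recursion only fails when $m-1$ lands in the eight-element set $N=\{2,4,6,8,14,20,26,32\}$. Since $R'_{9}=233$, for $k\geq117$ there are at least nine Ramanujan primes in $(2k,4k)$, so at least one value of $m-1$ avoids $N$. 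This cuts the finite verification down to $17\leq k\leq116$, whereas your route through inequality \eqref{10} forces a $K_{0}$ on the order of a thousand. Both arguments are sound; the paper's trades a little extra case analysis in \autoref{table3} for a much lighter computer check.
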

\vskip 10pt

\begin{proof} From \autoref{table3} it can be seen for $k=17$ that our assumption
is true. There exists at least one Ramanujan prime between $2k$ and
$4k$ for $k\geq3$ by \autoref{t2}. Now let $j\geq17$ and $2k+j$ be a Ramanujan
prime. Therefore $\left\{ j,\mbox{ }j+1,\mbox{ }\ldots,\mbox{ }2k-1,\mbox{ }2k\right\} $
can be paired as sum of each pair will be equal to $2k+j$, namely
$\left\{ j,2k\right\} $, $\left\{ j+1,2k-1\right\} $, $\left\{ j+2,2k-2\right\} $,
..., $\left\{ \left\lfloor \frac{j+2k}{2}\right\rfloor ,\left\lfloor \frac{j+2k}{2}\right\rfloor +1\right\} $.
Also, by induction $\left\{ 1,\mbox{ }2,\mbox{ }\ldots,\mbox{ }j-1\right\} $
can be arranged in disjoint pairs if $j-1\geq34$. So it is enough
to show that we can always find such an odd natural number $j$ or
equivalently that there exist a Ramanujan prime in the interval $\left(2k+34,4k\right)$. One can easily check that $\left\{ 1,\mbox{ }2,\mbox{ }\ldots,\mbox{ }2k\right\}$ can be arranged in $k$ disjoint pairs as $k\leq17$ only for $k\in \left\{ 5, 6, 8, 9, 11, 12, 14, 15, 17\right\}$. Some certain arrangements given in \autoref{table3}. From \autoref{table3} it can be seen if $j-1\in M=\left\{ 10,\mbox{ }12,\mbox{ }16,\mbox{ }18,\mbox{ }22,\mbox{ }24,\mbox{ }28,\mbox{ }30\right\} $ or by induction hypothesis if $\geq34$ that there is a way to pair the
set. So there is no solution if and only if $j-1\in N=\left\{ 2,\mbox{ }4,\mbox{ }6,\mbox{ }8,\mbox{ }14,\mbox{ }20,\mbox{ }26,\mbox{ }32\right\} $.
But as $R_{9}\left(2\right)=233$ there must be least $9$ choices
for $j$ if $k\geq117$. So all solutions can not be from $N$. By
\autoref{table3} our statement is also verified for $17\leq k<117$.
\end{proof}

\vskip 29pt

\begin{center}
\begin{tabular}{ccc|c|c|c|c|c|c|c|}
\hline 
\multicolumn{1}{|c|}{$k$} & \multicolumn{1}{c|}{{\small $5$}} & {\small $6$}  & {\small $8$}  & {\small $9$}  & {\small $11$}  & {\small $12$}  & {\small $14$}  & {\small $15$}  & {\small $17$}\tabularnewline
\hline 
\multicolumn{1}{c|}{} & \multicolumn{1}{c|}{{\small $1,10$}} & {\small $1,10$}  & {\small $1,16$}  & {\small $1,10$}  & {\small $1,10$}  & {\small $1,10$}  & {\small $1,10$}  & {\small $1,10$}  & {\small $1,10$}\tabularnewline
\cline{2-10} 
\multicolumn{1}{c|}{} & \multicolumn{1}{c|}{{\small $2,9$}} & {\small $2,9$}  & {\small $2,15$}  & {\small $2,9$}  & {\small $2,9$}  & {\small $2,9$}  & {\small $2,9$}  & {\small $2,9$}  & {\small $2,9$}\tabularnewline
\cline{2-10} 
\multicolumn{1}{c|}{} & \multicolumn{1}{c|}{{\small $3,8$}} & {\small $3,8$}  & {\small $3,14$}  & {\small $3,8$}  & {\small $3,8$}  & {\small $3,8$}  & {\small $3,8$}  & {\small $3,8$}  & {\small $3,8$}\tabularnewline
\cline{2-10} 
\multicolumn{1}{c|}{} & \multicolumn{1}{c|}{{\small $4,7$}} & {\small $4,7$}  & {\small $4,13$}  & {\small $4,7$}  & {\small $4,7$}  & {\small $4,7$}  & {\small $4,7$}  & {\small $4,7$}  & {\small $4,7$}\tabularnewline
\cline{2-10} 
\multicolumn{1}{c|}{} & \multicolumn{1}{c|}{{\small $5,6$}} & {\small $5,12$}  & {\small $5,12$}  & {\small $5,6$}  & {\small $5,6$}  & {\small $5,6$}  & {\small $5,6$}  & {\small $5,6$}  & {\small $5,6$}\tabularnewline
\cline{2-10} 
 & \multicolumn{1}{c|}{} & {\small $6,11$}  & {\small $6,11$}  & {\small $11,18$}  & {\small $11,18$}  & {\small $11,18$}  & {\small $11,18$}  & {\small $11,18$}  & {\small $11,18$}\tabularnewline
\cline{3-10} 
 &  &  & {\small $7,10$}  & {\small $12,17$}  & {\small $12,17$}  & {\small $12,17$}  & {\small $12,17$}  & {\small $12,17$}  & {\small $12,17$}\tabularnewline
\cline{4-10} 
 &  &  & {\small $8,9$}  & {\small $13,16$}  & {\small $13,16$}  & {\small $13,16$}  & {\small $13,16$}  & {\small $13,16$}  & {\small $13,16$}\tabularnewline
\cline{4-10} 
 &  & \multicolumn{1}{c}{} &  & {\small $14,15$}  & {\small $14,15$}  & {\small $14,15$}  & {\small $14,15$}  & {\small $14,15$}  & {\small $14,15$}\tabularnewline
\cline{5-10} 
 &  & \multicolumn{1}{c}{} & \multicolumn{1}{c}{} &  & {\small $19,22$}  & {\small $19,22$}  & {\small $19,28$}  & {\small $19,28$}  & {\small $19,22$}\tabularnewline
\cline{6-10} 
 &  & \multicolumn{1}{c}{} & \multicolumn{1}{c}{} &  & {\small $20,21$}  & {\small $20,21$}  & {\small $20,27$}  & {\small $20,27$}  & {\small $20,21$}\tabularnewline
\cline{6-10} 
 &  & \multicolumn{1}{c}{} & \multicolumn{1}{c}{} & \multicolumn{1}{c}{} &  & {\small $23,24$}  & {\small $21,26$}  & {\small $21,26$}  & {\small $23,24$}\tabularnewline
\cline{7-10} 
 &  & \multicolumn{1}{c}{} & \multicolumn{1}{c}{} & \multicolumn{1}{c}{} & \multicolumn{1}{c}{} &  & {\small $22,25$}  & {\small $22,25$}  & {\small $25,34$}\tabularnewline
\cline{8-10} 
 &  & \multicolumn{1}{c}{} & \multicolumn{1}{c}{} & \multicolumn{1}{c}{} & \multicolumn{1}{c}{} &  & {\small $23,24$}  & {\small $23,24$}  & {\small $26,33$}\tabularnewline
\cline{8-10} 
 &  & \multicolumn{1}{c}{} & \multicolumn{1}{c}{} & \multicolumn{1}{c}{} & \multicolumn{1}{c}{} & \multicolumn{1}{c}{} &  & {\small $29,30$}  & {\small $27,32$}\tabularnewline
\cline{9-10} 
 &  & \multicolumn{1}{c}{} & \multicolumn{1}{c}{} & \multicolumn{1}{c}{} & \multicolumn{1}{c}{} & \multicolumn{1}{c}{} & \multicolumn{1}{c}{} &  & {\small $28,31$}\tabularnewline
\cline{10-10} 
 &  & \multicolumn{1}{c}{} & \multicolumn{1}{c}{} & \multicolumn{1}{c}{} & \multicolumn{1}{c}{} & \multicolumn{1}{c}{} & \multicolumn{1}{c}{} &  & {\small $29,30$}\tabularnewline
\cline{10-10} 
\end{tabular}
\par\end{center}

\begin{tab}
\noun{Partitions of Sets $\left\{ 1,2,\ldots,2k\right\} $
for Certain Numbers $k$ up to 17\label{table3}} 
\end{tab}

\subsubsection*{3. Some Inequalities for $\pi\left(x\right)$}

$\hphantom{}$

In this section we prove some inequalities for the prime-counting function by using Dusart\rq{}s inequalities to show $2\pi_{R}\left(x\right)>\pi_{R}\left(2x\right)$ and get better bounds for derived Ramanujan primes.

\begin{lem}\label{l1}
For $x\geq569$ the inequality 
\[
\pi\left(2x\right)-\pi\left(x\right)\leq2\left(\pi\left(x\right)-\pi\left(\frac{x}{2}\right)\right)
\]
 holds.
 \end{lem}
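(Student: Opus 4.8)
The plan is to symmetrize the inequality and then combine Dusart's two-sided estimates with the concavity of the principal term of $\pi$. Adding $\pi(x)$ to both sides, the claim $\pi(2x)-\pi(x)\le 2\bigl(\pi(x)-\pi(\tfrac x2)\bigr)$ is equivalent to
\[
\pi(2x)+2\,\pi\!\left(\tfrac x2\right)\le 3\,\pi(x),\qquad\text{equivalently}\qquad\tfrac13\,\pi(2x)+\tfrac23\,\pi\!\left(\tfrac x2\right)\le \pi(x).
\]
Since $\tfrac13(2x)+\tfrac23\cdot\tfrac x2=x$, the second form says that $\pi(x)$ dominates a weighted average of $\pi(2x)$ and $\pi(\tfrac x2)$ taken at arguments whose weighted mean is exactly $x$; this would be immediate from concavity of $\pi$, and the principal term $\mathrm{li}(y)$ of $\pi(y)$ is genuinely concave for $y>1$. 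So the inequality should hold with a surplus of size about $(\ln 2)/\ln^{2}x$, and the work is to certify that surplus.

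To that end I would bound the left side of $\pi(2x)+2\pi(\tfrac x2)\le 3\pi(x)$ from above and the right side from below. The first-order bounds of \eqref{8} do not suffice here: both sides are asymptotic to $x/\ln x$, and the gap $(1.2762-1)/\ln x$ between Dusart's upper and lower constants already swamps the $(\ln 2)/\ln^{2}x$ surplus. I would instead use Dusart's second-order estimates \cite{3},
\[
\frac{y}{\ln y}\Bigl(1+\frac1{\ln y}+\frac{2}{\ln^{2}y}\Bigr)\le\pi(y)\le\frac{y}{\ln y}\Bigl(1+\frac1{\ln y}+\frac{2.334}{\ln^{2}y}\Bigr),
\]
the lower bound for $y\ge 88783$ and the upper bound for $y\ge 2953$, applying the lower bound to $y=x$ and the upper bound to $y=2x$ and $y=x/2$ (admissible once $x\ge 88783$). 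Writing $t=\ln x$ and $a=\ln 2$, this reduces the claim to the one-variable inequality
\[
\frac{2}{t+a}\Bigl(1+\frac1{t+a}+\frac{2.334}{(t+a)^{2}}\Bigr)+\frac{1}{t-a}\Bigl(1+\frac1{t-a}+\frac{2.334}{(t-a)^{2}}\Bigr)\le\frac{3}{t}\Bigl(1+\frac1{t}+\frac{2}{t^{2}}\Bigr)
\]
for $t\ge\ln 88783\approx 11.4$. Expanding the difference of the two sides in powers of $1/t$, the leading term is $a/t^{2}>0$ while the remainder is $O(1/t^{3})$ with an explicitly boundable constant (roughly $-1.06/t^{3}$), so the difference is positive for every $t\ge\ln 88783$. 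This last step is a routine, if slightly tedious, estimate of a rational function in $t$, which I would not write out in full.

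Finally I would dispose of $569\le x<88783$ by direct computation: both $x\mapsto\pi(2x)-\pi(x)$ and $x\mapsto 2\bigl(\pi(x)-\pi(x/2)\bigr)$ are step functions whose jumps occur only when one of $x/2,\ x,\ 2x$ meets a prime, so the inequality need only be checked on each of the finitely many intervals of constancy, i.e.\ by looping over the primes up to $2\cdot 88783$. The hypothesis $x\ge 569$ genuinely cannot be removed --- the inequality already fails at $x=100$, where $\pi(200)-\pi(100)=21>20=2\bigl(\pi(100)-\pi(50)\bigr)$ --- and the same finite check confirms the threshold $569$ in the statement. The chief obstacle is the middle step: one must use precisely the second-order (not the first-order) estimates and keep the constants sharp enough that the slender $(\ln 2)/\ln^{2}x$ surplus survives combining the three bounds, and then certify positivity from an explicit cutoff; the reformulation and the finite verification are routine.
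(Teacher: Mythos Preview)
Your argument is correct and follows the same overall strategy as the paper: rewrite the claim as $\pi(2x)+2\pi(x/2)\le 3\pi(x)$, sandwich both sides with explicit Dusart estimates to obtain an elementary inequality in $t=\ln x$, and dispose of the remaining finite range by direct computation. The concavity heuristic you give for the $(\ln 2)/\ln^{2}x$ surplus is a nice bit of motivation absent from the paper.

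The execution differs in the choice of Dusart bound. You correctly note that the first-order bounds \eqref{8} are too crude, and you pass to the second-order polynomial bounds $\pi(y)=\frac{y}{\ln y}\bigl(1+\frac{1}{\ln y}+\frac{c}{\ln^{2}y}\bigr)$ with $c\in\{2,2.334\}$; this forces you into an asymptotic expansion whose tail you leave as a ``routine, if slightly tedious'' estimate. The paper instead uses Dusart's Legendre-type bounds \eqref{12}, namely $\frac{x}{\ln x-1}\le\pi(x)\le\frac{x}{\ln x-1.1}$. Because these already match at order $1/\ln x$ (the gap is only $0.1/\ln^{2}x$), they are sharp enough, and their simple algebraic form lets the reduced inequality collapse to a \emph{linear} inequality in $\ln x$ with an explicit threshold $\ln x\ge 4.72631$; no expansion or remainder bound is needed. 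So the paper's route is shorter and fully explicit, while yours trades a little extra work for the second-order polynomial form. Either way the analytic range covers all sufficiently large $x$, and the computer check (up to $60184$ in the paper, up to $88783$ for you) handles the rest.
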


\begin{proof} By P. Dusart's \cite{3} inequalities

\begin{equation}
\frac{x}{\ln x-1}\underset{x\geq5393}{\leq}\pi\left(x\right)\underset{x\geq60184}{\leq}\frac{x}{\ln x-1.1}\label{12}
\end{equation}
it is enough to show that 
\begin{equation}
\frac{x}{\ln\frac{x}{2}-1.1}+\frac{2x}{\ln2x-1.1}\leq\frac{3x}{\ln x-1}.\label{13}
\end{equation}
Therefore we deduce that 
\begin{equation}
\frac{x}{\ln\frac{x}{2}-1.1}+\frac{2x}{\ln2x-1.1}\leq\frac{x}{\ln x-1.8}+\frac{2x}{\ln x-0.41}=\frac{3x\ln x-4.01x}{\ln^{2}x-2.21\ln x+0.738}\label{14}
\end{equation}
and for $x\geq\exp4.72631\geq112.877$ 
\begin{equation}
\frac{3x\ln x-4.01x}{\ln^{2}x-2.21\ln x+0.738}\leq\frac{3x}{\ln x-1}.\label{15}
\end{equation}
By computer check we also verify our statement for $569\leq x\leq60184$.
\end{proof}

$\hphantom{}$

In \cite{10} Rosser and Schoenfeld showed that \textit{\emph{for}}\textit{
}\textit{\emph{$x\geq20.5$ the inequality $\pi\left(2x\right)-\pi\left(x\right)>\frac{3}{5}\frac{x}{\ln x}$
holds. In \cite{3} Dusart improved this result and showed that }}the
inequality\emph{ $\pi\left(2x\right)-\pi\left(x\right)>\frac{x}{\ln x}-\frac{0.7x}{\ln^{2}x}$
}holds for \textit{\emph{$x\geq1328.5$}}.\textit{\emph{ In \cite{5}
Laishram showed that }}\emph{$\pi\left(x\right)-\pi\left(\frac{x}{2}\right)>\frac{x}{2\ln x}-\frac{0.010182x}{\ln^{2}x}$
}\textit{\emph{for $x\geq21088222$ by using Dusart's inequality $\left|\vartheta\left(x\right)-x\right|\leq\frac{0.006788x}{\ln x}$,
where $\vartheta\left(x\right)$ denotes Chebyshev function, equal
to $\underset{p\leq x}{\sum}\ln p$. In \cite{3} Dusart
gave better inequalities for $\vartheta\left(x\right)$. Following Laishram's
proof we will improve the bound for $\pi\left(x\right)-\pi\left(\frac{x}{2}\right)$
to get a better bound in \autoref{l4}.}}

$\hphantom{}$

\begin{lem}\label{l2}
For any $x\geq75374781$ the inequality

\emph{
\[
\pi\left(x\right)-\pi\left(\frac{x}{2}\right)>\frac{x}{2\ln x}\left(1-\frac{31.24}{\ln^{3}x}\right)
\]
 }

holds.
\end{lem}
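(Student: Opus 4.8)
The plan is to follow Laishram's argument from \cite{5}: bound $\pi(x)-\pi(x/2)$ below by a difference of values of the Chebyshev function $\vartheta$, and then substitute an explicit Dusart estimate for $\vartheta$. The novelty over \cite{5} is to feed in a $\ln^{-3}$-type bound rather than the $\ln^{-1}$ bound, which pushes the error down to order $x/\ln^{4}x$ and hence produces a relative correction of size $\asymp 1/\ln^{3}x$ instead of the weaker $O(1/\ln^{2}x)$. First I would record the elementary step: since every prime $p\in(x/2,x]$ has $\ln p\le\ln x$,
\[
\pi\left(x\right)-\pi\left(\tfrac{x}{2}\right)=\sum_{x/2<p\le x}1\ \ge\ \frac{1}{\ln x}\sum_{x/2<p\le x}\ln p=\frac{\vartheta\left(x\right)-\vartheta\left(x/2\right)}{\ln x}.
\]

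Next I would insert a Dusart inequality \cite{3} of the shape $\left|\vartheta(x)-x\right|\le \eta\, x/\ln^{3}x$ — valid for all $x$ beyond the explicit threshold that ultimately yields the number $75374781$ — using its lower form at $x$ and its upper form at $x/2$:
\[
\vartheta\left(x\right)-\vartheta\left(\tfrac{x}{2}\right)\ \ge\ \frac{x}{2}-\frac{\eta\, x}{\ln^{3}x}-\frac{\eta\, x}{2\ln^{3}(x/2)},
\]
and hence, dividing by $\ln x$,
\[
\pi\left(x\right)-\pi\left(\tfrac{x}{2}\right)\ \ge\ \frac{x}{2\ln x}-\frac{\eta\, x}{\ln^{4}x}-\frac{\eta\, x}{2\ln x\,\ln^{3}(x/2)}.
\]
Then I would remove the shift: writing $\ln(x/2)=\ln x-\ln 2$ and using $\ln^{3}(x/2)\ge\ln^{3}x\,(1-\ln 2/\ln x)^{3}$, the hypothesis $x\ge 75374781$ forces $\ln x\ge 18.1$, which is large enough that $(1-\ln 2/\ln x)^{-3}$ does not exceed an explicit constant $c$ slightly above $1$; the last term is then at most $c\,\eta\,x/(2\ln^{4}x)$, whence
\[
\pi\left(x\right)-\pi\left(\tfrac{x}{2}\right)\ \ge\ \frac{x}{2\ln x}\left(1-\frac{(2+c)\,\eta}{\ln^{3}x}\right),
\]
and with Dusart's sharpest admissible value of $\eta$ one verifies $(2+c)\,\eta\le 31.24$. (Keeping the positive integral in the partial-summation identity $\pi(x)=\vartheta(x)/\ln x+\int_{2}^{x}\vartheta(t)/(t\ln^{2}t)\,dt$ would in fact give a genuinely positive $\ln^{-2}$ correction, but the displayed form is the one convenient for the subsequent estimates.)

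This proves the inequality for $x$ at or above the analytic threshold; the finitely many integers between $75374781$ and that threshold are then dispatched by direct computation, using that on that finite range the right-hand side is increasing while $\pi(x)-\pi(x/2)$ is a step function. The step I expect to be the real work is this calibration: one has to choose exactly the right member of Dusart's family of $\vartheta$-estimates so that \emph{simultaneously} the constant stays at or below the advertised $31.24$ and the range of validity reaches all the way down to $75374781$ — a sharper $\vartheta$-bound has too large a starting point, a cruder one inflates the constant. The rest (the passage to $\vartheta$, the manipulation $\ln(x/2)=\ln x-\ln 2$, and the finite verification) is routine.
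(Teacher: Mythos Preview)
Your approach is essentially the paper's: pass to $\vartheta$ via $\pi(x)-\pi(x/2)\ge(\vartheta(x)-\vartheta(x/2))/\ln x$, insert Dusart's bound $|\vartheta(x)-x|\le 10x/\ln^{3}x$ (valid for $x\ge 32321$), and absorb the $\ln(x/2)$ shift into the constant. Your one misreading is the source of the threshold $75374781$: it does not come from Dusart's range of validity (that starts already at $32321$, far lower) nor from any tradeoff among Dusart's estimates, but purely from the calibration step --- it is exactly the point where $20+10\,(1-\ln 2/\ln x)^{-3}\le 31.24$ begins to hold --- so no finite computer verification is needed at all.
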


\begin{proof} By P. Dusart's \cite{3} inequality 
\begin{equation}
\left|\vartheta\left(x\right)-x\right|\leq\frac{10x}{\ln^{3}x}\label{16}
\end{equation}
 for any $x\geq32321$ we get 
\begin{equation}
\pi\left(x\right)-\pi\left(\frac{x}{2}\right)\geq\frac{\vartheta\left(x\right)-\vartheta\left(\frac{x}{2}\right)}{\ln x}\geq\frac{x}{\ln x}\left(1-\frac{10}{\ln^{3}x}-\frac{1}{2}\left(1+\frac{10}{\ln^{3}\frac{x}{2}}\right)\right)\label{17}
\end{equation}
and for $x\geq75374781$ 
\begin{equation}
\geq\frac{x}{2\ln x}\left(1-\frac{31.24}{\ln^{3}x}\right)\label{18}
\end{equation}
holds.
\end{proof}

$\hphantom{}$

\subsubsection*{4. Bounds for Derived Ramanujan Primes}

$\hphantom{}$

To prove a similar result to J. B. Rosser and L. Schoenfeld's inequality
\cite{9} $2\pi\left(x\right)>\pi\left(2x\right)$ for
Ramanujan primes, namely, $2\pi_{R}\left(x\right)>\pi_{R}\left(2x\right)$,
we will use the idea of S. L. Segal \cite{11}.

$\hphantom{}$

\begin{lem}\label{l3}
Let $k$ and $l$ be positive integers. The following two conditions are equivalent:

(i) $R_{k}+R_{l}\leq R_{k+l-1}.$

(ii) If $R_{k-1}\leq x<R_{k}$ and $R_{l-1}\leq y<R_{l}$, then the
inequality
\[
\pi_{R}\left(x+y\right)\leq\pi_{R}\left(x\right)+\pi_{R}\left(y\right)
\]
 holds.
 \end{lem}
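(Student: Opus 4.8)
The plan is to translate both conditions into statements purely about the strictly increasing sequence $R_{1}<R_{2}<\cdots$ of Ramanujan primes, via the elementary observation that $\pi_{R}(z)=m$ holds precisely when $R_{m}\le z<R_{m+1}$. Under the hypotheses of (ii) this gives $\pi_{R}(x)=k-1$ and $\pi_{R}(y)=l-1$, so the displayed inequality reads $\pi_{R}(x+y)\le k+l-2$, which by the same characterisation is equivalent to $x+y<R_{k+l-1}$. Thus (ii) is exactly the assertion that $x+y<R_{k+l-1}$ for every $x\in[R_{k-1},R_{k})$ and every $y\in[R_{l-1},R_{l})$.

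With this reformulation in hand, the implication (i)$\Rightarrow$(ii) is immediate: if $R_{k}+R_{l}\le R_{k+l-1}$, then any admissible pair satisfies $x+y<R_{k}+R_{l}\le R_{k+l-1}$. For (ii)$\Rightarrow$(i) I would argue by contraposition. Assume $R_{k}+R_{l}>R_{k+l-1}$ and set $\delta:=R_{k}+R_{l}-R_{k+l-1}>0$. Choose a real $\varepsilon$ with $0<\varepsilon<\min\{\delta/2,\;R_{k}-R_{k-1},\;R_{l}-R_{l-1}\}$, which is possible because the $R_{n}$ are strictly increasing, and put $x:=R_{k}-\varepsilon$ and $y:=R_{l}-\varepsilon$. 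Then $R_{k-1}\le x<R_{k}$ and $R_{l-1}\le y<R_{l}$, while $x+y=R_{k}+R_{l}-2\varepsilon>R_{k+l-1}$; hence $\pi_{R}(x+y)\ge k+l-1>(k-1)+(l-1)=\pi_{R}(x)+\pi_{R}(y)$, contradicting (ii).

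The one delicate point is the bookkeeping with the half-open intervals: $R_{k}+R_{l}$ is the (unattained) supremum of $x+y$ over the admissible range, which is exactly why (i) is phrased with ``$\le$'' rather than ``$<$'', and why in the converse one must take $x,y$ slightly below $R_{k},R_{l}$ instead of the endpoints themselves; it also uses that $\pi_{R}$ is evaluated at real, not merely integer, arguments, as in Segal's approach. The boundary cases are handled directly: when $k=1$ (and symmetrically when $l=1$) one has $R_{k+l-1}=R_{l}$, so with the convention $R_{0}=1$ condition (i) reduces to the false statement $R_{1}\le0$ and (ii) fails as well (take $x$ just below $R_{1}=2$ and $y$ just below $R_{l}$), so the equivalence holds vacuously there. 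I expect the reformulation in the first paragraph to be the crux; once it is in place, the rest is a short direct argument.
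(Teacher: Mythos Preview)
Your proposal is correct and follows essentially the same route as the paper: both directions hinge on the identification $\pi_{R}(z)\le k+l-2 \iff z<R_{k+l-1}$, and your reformulation in the first paragraph is exactly the mechanism behind the paper's chain of inequalities. For (ii)$\Rightarrow$(i) the paper simply takes $x=R_{k}-\tfrac12$ and $y=R_{l}-\tfrac12$, exploiting that the $R_{n}$ are integers so that $\tfrac12$ is automatically a small enough offset, in place of your $\varepsilon$-argument; otherwise the arguments coincide.
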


\begin{proof} \textbf{(i)}$\Rightarrow$\textbf{(ii):} By the conditions
on $x$ and $y$ it is easy to see that $x+y<R_{k}+R_{l}$ and $\pi_{R}\left(x+y\right)\leq\pi_{R}\left(R_{k}+R_{l}-1\right)$.
Likewise, one can check that

\begin{equation}
\pi_{R}\left(R_{k+l-2}\right)=k+l-2=\pi_{R}\left(R_{k-1}\right)+\pi_{R}\left(R_{l-1}\right)\leq\pi_{R}\left(x\right)+\pi_{R}\left(y\right).\label{19}
\end{equation}
 By (i) we get $R_{k}+R_{l}-1\leq R_{k+l-1}-1$ and easily 
\begin{equation}
\pi_{R}\left(x+y\right)\leq\pi_{R}\left(R_{k}+R_{l}-1\right)\leq\pi_{R}\left(R_{k+l-1}-1\right)\label{20}
\end{equation}
\begin{equation}
=\pi_{R}\left(R_{k+l-2}\right)\leq\pi_{R}\left(x\right)+\pi_{R}\left(y\right).\label{21}
\end{equation}

$\hphantom{}$

\textbf{(ii)}$\Rightarrow$\textbf{(i):} Set $x=R_{k}-\frac{1}{2}$
and $y=R_{l}-\frac{1}{2}$. Therefore we get $\pi_{R}\left(x\right)+\pi_{R}\left(y\right)=k+l-2$
and $\pi_{R}\left(x+y\right)=\pi_{R}\left(R_{k}+R_{l}-1\right)$.
By (ii) we deduce that $k+l-2\geq\pi_{R}\left(R_{k}+R_{l}-1\right)$
and $R_{k+l-1}-1\geq R_{k}+R_{l}-1$.
\end{proof}

$\hphantom{}$

\begin{thm}\label{t4}
For $x\geq11$ the inequality

\[
2\pi_{R}\left(x\right)>\pi_{R}\left(2x\right)
\]

 holds.
 \end{thm}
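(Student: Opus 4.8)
The route I would take is to derive the theorem from the Segal-type equivalence of \autoref{l3} applied with $k=l$. Specialising part (ii) of that lemma to $y=x$, the inequality $\pi_{R}(2x)\le 2\pi_{R}(x)$ holds for every $x$ lying in a block $R_{n-1}\le x<R_{n}$ precisely when part (i) holds for $k=l=n$, i.e.\ when
\[
2R_{n}\le R_{2n-1}.
\]
So the non-strict form of the inequality is exactly the family $2R_{n}\le R_{2n-1}$, and the strict form follows from the slightly sharper $2R_{n}\le R_{2n-2}$: this forces $2x<2R_{n}\le R_{2n-2}$, hence $\pi_{R}(2x)\le 2n-3<2\pi_{R}(x)$. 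The plan therefore splits into an asymptotic part — proving $2R_{n}\le R_{2n-2}$ for all $n$ past an explicit $N_{0}$ — and a finite part, comparing $2R_{n}$ with $R_{2n-2}$ (and, for the smallest indices, $2\pi_{R}(x)$ with $\pi_{R}(2x)$) directly from the list of Ramanujan primes.

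For the asymptotic part I would play Sondow's lower bound $R_{m}>p_{2m}$ against the relation $R_{n}\sim p_{2n}$ in effective form. Here $R_{2n-2}>p_{4n-4}$, while \autoref{t1} together with Laishram's explicit $n_{0}(\varepsilon)$ — equivalently, Dusart's prime estimates of the kind already used in Section~3 — bounds $2R_{n}$ from above. The decisive fact is that
\[
p_{4n-4}-2p_{2n}\sim 4n\ln 2>0 ,
\]
so there is a genuine positive gap; quantifying it with Dusart's two-sided bounds for $p_{m}$ turns it into $2R_{n}<R_{2n-2}$ for $n\ge N_{0}$. The residual range $n<N_{0}$ is then a bounded computation.

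The main obstacle is precisely this effective inequality $2R_{n}\le R_{2n-1}$ (respectively $R_{2n-2}$): its margin $p_{4n-4}-2p_{2n}$ is only of order $n$, while the terms being compared are of order $n\ln n$, so the \emph{relative} gap tends to $0$. Consequently Laishram's bound $R_{n}\le p_{3n}$ is useless here — its leading term $3n\ln n$ already exceeds $\tfrac12 p_{4n-4}\sim 2n\ln n$ — and a \emph{fixed} $\varepsilon$ in $R_{n}<(2+\varepsilon)n\ln n$ does not survive $n\to\infty$ either, since one needs $\varepsilon=O(1/\ln n)$. So the estimate has to be carried out with $\varepsilon$ shrinking with $n$, or directly through the second-order Dusart bounds for $\pi$ and $p_{m}$; pinning down a clean threshold $N_{0}$, and then dispatching the remaining finite range of $x$, is where the actual work sits.
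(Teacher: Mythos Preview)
Your reduction via \autoref{l3} to the family of inequalities $2R_{n}\le R_{2n-1}$ (and your remark that the strict version of the theorem needs $2R_{n}\le R_{2n-2}$) matches the paper's first move.  From that point on, however, the paper takes a different and much shorter route than the one you sketch.

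You propose to bound $2R_{n}$ from above and $R_{2n-1}$ from below \emph{separately} and then compare, and you correctly identify the obstruction: the gap $p_{4n-4}-2p_{2n}\sim 4n\ln 2$ is only of order $n$ against main terms of order $n\ln n$, so neither Laishram's $R_{n}\le p_{3n}$ nor any fixed-$\varepsilon$ instance of \autoref{t1} is sharp enough, and one is forced into a genuinely second-order estimate with $\varepsilon=O(1/\ln n)$.  This route can be pushed through, but it is laborious.

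The paper avoids this entirely by \emph{not} estimating $R_{n}$ and $R_{2n-1}$ separately.  Instead it invokes \autoref{l1},
\[
\pi(2x)-\pi(x)\le 2\bigl(\pi(x)-\pi(x/2)\bigr)\qquad(x\ge 569),
\]
at the single point $x=R_{n}$.  Since the defining property of $R_{n}$ gives the \emph{exact} equality $\pi(R_{n})-\pi(R_{n}/2)=n$, one reads off $\pi(2R_{n})-\pi(R_{n})\le 2n$ directly, and the comparison $2R_{n}\le R_{2n}$ follows.  The crucial point is that \autoref{l1} is precisely the ``second-order Dusart bound'' you gesture at in your last paragraph, but packaged as a single inequality in $\pi$ that can absorb the exact value $n$ rather than an upper bound for $R_{n}$; this is what makes the $O(1/\ln n)$ difficulty disappear.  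Your plan is not wrong, but \autoref{l1} is the device that turns a delicate asymptotic computation into two lines.
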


\begin{proof} By \autoref{l3} it is enough to show $2R_{n}\leq R_{2n-1}$.
But that is equivalent to $\pi_{R}\left(2R_{n}-1\right)\leq\pi_{R}\left(R_{2n}-1\right)$,
i.e., $2R_{n}\leq R_{2n}$. There we will use the idea of the proof
of Theorem 2 in \cite{15} and we will show that the inequality 
\begin{equation}
\pi\left(2R_{n}\right)-\pi\left(R_{n}\right)\leq2n\label{22}
\end{equation}
 holds. By \autoref{l1} we easily deduce that 
\begin{equation}
\pi\left(2R_{n}\right)-\pi\left(R_{n}\right)\leq2\left(\pi\left(R_{n}\right)-\pi\left(\frac{R_{n}}{2}\right)\right)=2n.\label{23}
\qedhere\end{equation}
\end{proof}

\begin{lem}\label{l4}
The $n$th Ramanujan prime satisfies the inequality

\[
R_{n}<\frac{8}{3}n\ln n
\]

for any $n\geq5315$.
\end{lem}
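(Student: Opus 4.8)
The plan is to reduce the statement to a lower bound on $\pi(x)-\pi\!\left(\frac{x}{2}\right)$, exactly in the style of the proofs of Sondow and Laishram of bounds of the form $R_{n}<(2+\varepsilon)n\ln n$, and then feed in \autoref{l2}. By the definition of the $n$th Ramanujan prime, it suffices to show that $\pi(x)-\pi\!\left(\frac{x}{2}\right)\geq n$ holds for every real $x\geq\frac{8}{3}n\ln n$; this makes every integer $\geq\frac{8}{3}n\ln n$ a valid threshold, so $R_{n}\leq\lceil\frac{8}{3}n\ln n\rceil$, and since $\frac{8}{3}n\ln n$ is never an integer for $n\geq2$ one upgrades this to the strict bound $R_{n}<\frac{8}{3}n\ln n$ simply by running the same argument with $\frac{8}{3}n\ln n-1$ in place of $\frac{8}{3}n\ln n$ — a change the estimates below absorb without comment.

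For the large values of $n$ I would apply \autoref{l2} directly. On the range $x\geq75374781$ its right-hand side $\frac{x}{2\ln x}\!\left(1-\frac{31.24}{\ln^{3}x}\right)$ is a product of two positive increasing functions of $x$, hence increasing; therefore, as soon as $\frac{8}{3}n\ln n\geq75374781$, for all $x\geq\frac{8}{3}n\ln n$ we get
\[
\pi(x)-\pi\!\left(\tfrac{x}{2}\right)>\frac{\tfrac{8}{3}n\ln n}{2\ln\!\left(\tfrac{8}{3}n\ln n\right)}\left(1-\frac{31.24}{\ln^{3}\!\left(\tfrac{8}{3}n\ln n\right)}\right)=n\cdot\frac{4\ln n}{3\ln\!\left(\tfrac{8}{3}n\ln n\right)}\left(1-\frac{31.24}{\ln^{3}\!\left(\tfrac{8}{3}n\ln n\right)}\right),
\]
so everything comes down to the last factor being $\geq1$. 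Writing $L=\ln n$ it equals $\frac{4}{3}\cdot\frac{L}{L+\ln L+\ln(8/3)}\left(1-\frac{31.24}{(L+\ln L+\ln(8/3))^{3}}\right)$, which is an increasing function of $n$ on the relevant range (the quotient is increasing for $n\geq3$, and the bracket is positive and increasing), so it is enough to verify it at the single point $n=n_{1}$ determined by $\frac{8}{3}n_{1}\ln n_{1}=75374781$, i.e. $n_{1}\approx1.95\times10^{6}$; there a direct computation gives $\approx1.06>1$. This disposes of all $n\geq n_{1}$.

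For $5315\leq n<n_{1}$ I would finish by a finite computation. The straightforward option is to compute $R_{n}$ for these $n$ — by Laishram's bound $R_{n}\leq p_{3n}$ they all lie among the first $3n_{1}\approx6\times10^{6}$ primes, so a sieve up to about $10^{8}$ suffices — and check $R_{n}<\frac{8}{3}n\ln n$ directly. To cut down the computational range one may instead use the sharper, lower-threshold estimate \eqref{17} from the proof of \autoref{l2}, namely $\pi(x)-\pi\!\left(\frac{x}{2}\right)\geq\frac{x}{\ln x}\!\left(\frac{1}{2}-\frac{10}{\ln^{3}x}-\frac{5}{\ln^{3}(x/2)}\right)$ valid for $x\geq64642$; repeating the monotonicity argument with it covers every $n$ down to roughly $3.5\times10^{4}$, leaving only $5315\leq n\lesssim3.5\times10^{4}$ for the machine check.

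The main obstacle is the behaviour right at the cut-off $n=5315$: there the inequality $R_{n}<\frac{8}{3}n\ln n$ holds with almost no room — the largest $n$ for which it fails sits just below $5315$ — so the analytic estimates are hopeless in that regime and one must rely on the finite computation; the role of the constant $\frac{8}{3}$ (strictly between the asymptotic $R_{n}\sim p_{2n}\sim2n\ln n$ and Laishram's $R_{n}\leq p_{3n}$) and of the bound $5315$ is precisely to make the analytic argument take over before the numerics get out of hand. A secondary point that needs genuine care is the rigorous monotonicity of the factors above, so that a single evaluation at $n_{1}$ (and at the corresponding threshold in the sharpened version) really does imply the inequality for all larger $n$.
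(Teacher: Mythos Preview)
Your proposal is correct and follows essentially the same route as the paper: reduce to $\pi(x)-\pi(x/2)>n$ for $x\geq\frac{8}{3}n\ln n$, feed in \autoref{l2}, verify the resulting elementary inequality analytically for large $n$ (the paper's cut-off $n\geq2113924$ matches your $n_{1}\approx1.95\times10^{6}$ coming from $\frac{8}{3}n_{1}\ln n_{1}=75374781$), and finish $5315\leq n<n_{1}$ by a computer check. Your packaging via monotonicity of the single factor is slightly cleaner than the paper's split into $\frac{x}{\ln x}>2.011n$ and $1-\frac{31.24}{\ln^{3}x}>\frac{1}{1.0055}$, and your suggested sharpening via \eqref{17} to shrink the numerical range is a nice extra, but the substance is the same.
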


\begin{proof} It is enough to show that $\pi\left(x\right)-\pi\left(\frac{x}{2}\right)>n$
if $x\geq\frac{8}{3}n\ln n$. We have
\begin{equation}
\frac{x}{\ln x}\geq\frac{8n\ln n}{3\ln\left(\frac{8}{3}n\ln n\right)}>2.011n\label{24}
\end{equation}
 for all $n\geq2193650$. By \autoref{l2} we deduce that 
\begin{equation}
\pi\left(x\right)-\pi\left(\frac{x}{2}\right)\geq\frac{x}{2\ln x}\left(1-\frac{31.24}{\ln^{3}x}\right)\geq1.0055n\left(1-\frac{31.24}{\ln^{3}x}\right)\label{25}
\end{equation}
where $1-\frac{31.24}{\ln^{3}x}>\frac{1}{1.0055}$ for $x\geq75374781$.
As $\pi\left(x\right)-\pi\left(\frac{x}{2}\right)>n$ for $x\geq75374781$
and $R_{2113924}=75374791$, we may take $R_{m+1}>x\geq R_{m}$ for
$m\geq2113924$. So our statement is true for $n\geq2113924$. By
computer check we see that our statement is also true for $5315\leq n<2113924$.
\end{proof}

\begin{thm}\label{t5}
The $n$th derived Ramanujan prime satisfies the inequalities
\begin{equation}
R_{2n}\leq R_{n}'<R_{3n}\label{a}
\end{equation}
for any $n\geq1$.
\end{thm}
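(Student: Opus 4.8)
The plan is to prove the two inequalities separately. For $R_{2n}\le R'_n$ I would use \autoref{t4}; for $R'_n<R_{3n}$ I would prove the stronger fact that the inequality defining $R'_n$ already holds for \emph{every} $x\ge R_{3n-1}$, so that $R'_n\le R_{3n-1}<R_{3n}$. In both cases the argument is clean only once $n$ is moderately large, and the finitely many remaining $n$ are to be dispatched by the same computation that underlies Table~1.

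For the lower bound, first replace $x$ by $x/2$ in \autoref{t4} to get $2\pi_{R}(x/2)>\pi_{R}(x)$, hence $\pi_{R}(x)-\pi_{R}(x/2)<\pi_{R}(x/2)$, valid for $x\ge22$. Since $R'_n$ is nondecreasing in $n$ and $R'_2=41\ge22$, I may apply this at $x=R'_n$ for every $n\ge2$; combined with the defining property $\pi_{R}(R'_n)-\pi_{R}(R'_n/2)\ge n$ it gives $\pi_{R}(R'_n/2)\ge n+1$, and therefore
\[
\pi_{R}(R'_n)=\bigl(\pi_{R}(R'_n)-\pi_{R}(R'_n/2)\bigr)+\pi_{R}(R'_n/2)\ge 2n+1 ,
\]
so $R'_n\ge R_{2n+1}>R_{2n}$. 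The case $n=1$ is the equality $R'_1=11=R_2$.

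For the upper bound, fix $x\ge R_{3n-1}$ and set $m=\pi_{R}(x)$, so that $m\ge 3n-1$ and $R_m\le x<R_{m+1}$. The crux is the claim
\[
R_{m+1}\le 2R_{m-n+1}\qquad(m\ge 3n-1):
\]
granting it, $x/2<R_{m+1}/2\le R_{m-n+1}$, hence $\pi_{R}(x/2)\le m-n$ and $\pi_{R}(x)-\pi_{R}(x/2)\ge n$, which forces $R'_n\le R_{3n-1}<R_{3n}$. To prove the claim I would bound $R_{m+1}$ from above by \autoref{l4}, $R_{m+1}<\tfrac83(m+1)\ln(m+1)$ (valid once $m+1\ge5315$), and $R_{m-n+1}$ from below by Sondow's inequality $R_{m-n+1}>p_{2(m-n+1)}$ together with the classical bound $p_j>j\ln j$, which gives $2R_{m-n+1}>4(m-n+1)\ln\bigl(2(m-n+1)\bigr)$. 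It then remains to verify
\[
\tfrac83(m+1)\ln(m+1)\le 4(m-n+1)\ln\bigl(2(m-n+1)\bigr),
\]
which I would obtain by multiplying the two factorwise inequalities $\tfrac83(m+1)\le 4(m-n+1)$ (equivalent to $m\ge 3n-1$) and $\ln(m+1)\le\ln\bigl(2(m-n+1)\bigr)$ (equivalent to $m\ge 2n-1$), both of which hold throughout the relevant window. Since the smallest value of $m+1$ occurring here is $3n$, this needs $3n\ge5315$, so it settles all $n\ge1772$.

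The hard part is precisely this last claim $R_{m+1}\le 2R_{m-n+1}$, because it must pair an \emph{upper} estimate for Ramanujan primes (\autoref{l4}) with a \emph{lower} one (Sondow's $R_k>p_{2k}$ and $p_j>j\ln j$), and the two elementary comparisons have to be simultaneously sharp exactly on the range $m\ge 3n-1$ (the coefficient comparison is in fact an equality at $m=3n-1$); by contrast the lower bound $R_{2n}\le R'_n$ is almost immediate from \autoref{t4}. The outstanding range $1\le n<1772$ (and, for the lower bound, $n=1$) is finite: it suffices to list the Ramanujan primes up to index $3\cdot1772=5316$, all of which lie below $10^5$, read off the values $R'_n$, and check $R_{2n}\le R'_n<R_{3n}$ directly, exactly as for Table~1.
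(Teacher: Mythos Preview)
Your proof is correct and follows the paper's approach: \autoref{t4} for the lower bound, and \autoref{l4} together with Sondow's $R_k>p_{2k}$ and Rosser's $p_j>j\ln j$ for the upper bound, followed by a finite computer check for small~$n$. Your treatment of the upper bound is in fact more careful than the paper's (you verify the defining inequality for \emph{every} $x\ge R_{3n-1}$ via the parametrized claim $R_{m+1}\le 2R_{m-n+1}$, whereas the paper argues only at the single point $x=R_{3n}$ by reducing to $2R_{2n}>R_{3n}$); the one slip is the numerical assertion that the Ramanujan primes up to index $5316$ lie below $10^5$, since already $R_{5316}>p_{10632}>1.1\times 10^5$.
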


\begin{proof} For $n=1$, the inequalities hold. If $n>1$, to prove
the left side of \eqref{a}, it is enough to show that $\pi_{R}\left(R_{2n}\right)-\pi_{R}\left(\frac{R_{2n}}{2}\right)\leq n$.
By \autoref{t4} we can see that 
\begin{equation}
2\pi_{R}\left(\frac{R_{2n}}{2}\right)\geq\pi_{R}\left(R_{2n}\right)=2n\label{26}
\end{equation}
 holds. As $R_{2\cdot1}=11$, by \eqref{26} the inequality 
\begin{equation}
\pi_{R}\left(R_{2n}\right)-\pi_{R}\left(\frac{R_{2n}}{2}\right)\leq2n-n=n\label{27}
\end{equation}
holds for any $n\geq1$. Now by Sondow's Theorem and Rosser's Theorem
we deduce that 
\begin{equation}
4n\ln4n\leq p_{4n}<R_{2n}\leq R_{n}'.\label{28}
\end{equation}

Let us now show the right side of \eqref{a}, namely $R_{n}'<R_{3n}$.
Similarly, it is enough to show that $\pi_{R}\left(R_{3n}\right)-\pi_{R}\left(\frac{R_{3n}}{2}\right)>n$,
that is, $2n>\pi_{R}\left(\frac{R_{3n}}{2}\right)$. This inequality holds
if and only if $\pi_{R}\left(R_{2n}\right)>\pi_{R}\left(\frac{R_{3n}}{2}\right)$,
that is, $2R_{2n}>R_{3n}$. By Sondow's Theorem and Rosser's Theorem we
get
\[
2R_{2n}>2p_{4n}>8n\ln4n.
\]
By \autoref{l4} we have the inequality $8n\ln3n>R_{3n}$ for
any $n\geq5315$. As $8n\ln 4n\geq8n\ln3n$ for all $n\geq1$,
the inequality $2R_{2n}>R_{3n}$ holds for all $n\geq5315$. By computer check we
can see that the right side of the inequality holds also for
$5315>n\geq1$.
\end{proof}
\vskip 10pt

\begin{cor}\label{c4}
For $n>0$, the $n$th derived Ramanujan prime satisfies 
\begin{equation}
p_{4n} < R'_n < p_{9n}.\label{35}
\end{equation}
\end{cor}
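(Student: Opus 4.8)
The plan is to read off \eqref{35} directly from the two-sided estimate $R_{2n}\le R_n'<R_{3n}$ established in \autoref{t5}, combined with the classical location of Ramanujan primes among the ordinary primes already recorded in the introduction: Sondow's lower bound $R_m>p_{2m}$ (valid for $m>1$) and Laishram's upper bound $R_m\le p_{3m}$ (valid for all $m\ge1$). No new analytic estimate is required; the corollary is purely a matter of translating the Ramanujan-prime indices $2n$ and $3n$ into ordinary-prime indices via the comparisons $p_{2m}<R_m\le p_{3m}$.

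For the left-hand inequality, I would fix $n\ge1$ and apply \autoref{t5} to get $R_n'\ge R_{2n}$. Since $2n\ge2>1$, Sondow's inequality applies with index $m=2n$ and gives $R_{2n}>p_{2(2n)}=p_{4n}$; chaining these yields $p_{4n}<R_n'$. (As a numerical check at $n=1$ one has $p_4=7<11=R_1'$, but the general argument already covers this case.)

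For the right-hand inequality, \autoref{t5} gives $R_n'<R_{3n}$, while Laishram's bound with index $m=3n$ gives $R_{3n}\le p_{3(3n)}=p_{9n}$; combining these produces the desired strict inequality $R_n'<R_{3n}\le p_{9n}$.

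There is essentially no obstacle: all of the substance is already contained in \autoref{t5}, and the corollary merely records its consequence after converting indices. The only things to be mildly careful about are that Sondow's lower bound needs its index to exceed $1$—which holds here because the relevant index is $2n\ge2$—and that \autoref{t5} is asserted for every $n\ge1$, so that no small cases need to be handled separately.
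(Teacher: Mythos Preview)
Your proposal is correct and follows exactly the paper's own argument: apply \autoref{t5} to get $R_{2n}\le R_n'<R_{3n}$ and then translate the indices via the Sondow--Laishram bounds $p_{2m}<R_m\le p_{3m}$. The paper's proof is a one-line reference to precisely these two ingredients, so there is nothing to add.
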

\begin{proof}
Use \autoref{t5} together with Sondow's and Laishram's bounds
\begin{equation}
p_{2n} < R_n < p_{3n}.\label{36}
\end{equation}
\end{proof}

Note that the right side of \eqref{35} can be replaced by $R'_n < p_{8n}$ for $n\geq5315$ if we combine \autoref{l4}, \autoref{t5} and Rosser's Theorem.

In \cite{12} Shevelev showed that 
\begin{equation}
\pi_{R}\left(x\right)\sim\frac{\pi\left(x\right)}{2}\sim\frac{x}{2\ln x}\label{29}
\end{equation}
holds following Sondow's $R_{n}\sim p_{2n}$ result. Combining \eqref{29}
with Sondow's method in \cite{15} it is easy to see the following corollary.
Denote by $\pi_{R'}\left(x\right)$ the derived-Ramanujan-prime-counting
function.

\vskip 10pt

\begin{cor}\label{c5}
As $n\rightarrow\infty$ the asymptotic
formula $R_{n}'\sim R_{2n}\sim p_{4n}$ holds, and
given $\varepsilon>0$ there exists $N_{\varepsilon}$ such that $R_{n}'<\left(4+\varepsilon\right)n\ln n$
for $n\geq N_{\varepsilon}$. Moreover
\[
\pi_{R'}\left(x\right)\sim\frac{\pi_{R}\left(x\right)}{2}\sim\frac{\pi\left(x\right)}{4}\sim\frac{x}{4\ln x}.
\]
\end{cor}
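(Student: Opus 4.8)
The plan is to bootstrap from the three ingredients already in hand: the asymptotic $R_n \sim p_{2n}$ of Sondow, the bounds $R_{2n} \le R'_n < R_{3n}$ of \autoref{t5}, and the asymptotic $\pi_R(x) \sim \pi(x)/2 \sim x/(2\ln x)$ of Shevelev recorded in \eqref{29}. First I would establish $R'_n \sim R_{2n}$. Squeezing \eqref{a} is not immediate because $R_{3n}/R_{2n}$ need not tend to $1$ on its own; instead I would use $R_n \sim p_{2n}$ together with the prime number theorem $p_m \sim m\ln m$ to write $R_{2n} \sim p_{4n} \sim 4n\ln n$ and $R_{3n} \sim p_{6n} \sim 6n\ln n$. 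That still leaves a gap of a factor $3/2$, so the pure sandwich argument fails; the genuine input must be that $R'_n$ sits much closer to the bottom of the interval. The clean way is to run Sondow's own argument for $R_n$ (Theorem 2 of \cite{15}) with $\pi$ replaced by $\pi_R$: since $\pi_R(x) - \pi_R(x/2)$ grows like $\pi_R(x)/2 \sim x/(4\ln x)$ by \eqref{29}, for any $\varepsilon > 0$ one has $\pi_R(x) - \pi_R(x/2) \ge n$ as soon as $x \ge (4+\varepsilon)n\ln n$ and $n$ is large, which gives $R'_n < (4+\varepsilon)n\ln n$; and conversely $\pi_R(R'_n) - \pi_R(R'_n/2) = n$ forces $R'_n/(4\ln R'_n) \gtrsim n$, hence $R'_n \gtrsim 4n\ln n$. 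Combining the two bounds yields $R'_n \sim 4n\ln n \sim p_{4n} \sim R_{2n}$, which is the first assertion, and the displayed inequality $R'_n < (4+\varepsilon)n\ln n$ for $n \ge N_\varepsilon$ falls out of the same computation.

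For the counting-function asymptotics I would argue by inversion. From $R'_n \sim 4n\ln n$ we get, setting $x = R'_n$, that $n \sim x/(4\ln x)$: indeed $\ln R'_n \sim \ln(4n\ln n) \sim \ln n$, so $x \sim 4n\ln n \sim 4n\ln x$, whence $n \sim x/(4\ln x)$. Since $\pi_{R'}(R'_n) = n$ and $\pi_{R'}$ is a step function that jumps by one at each $R'_n$, for general $x$ with $R'_n \le x < R'_{n+1}$ we have $\pi_{R'}(x) = n$, and because consecutive derived Ramanujan primes satisfy $R'_{n+1}/R'_n \to 1$ (both being $\sim 4n\ln n$), we obtain $\pi_{R'}(x) \sim x/(4\ln x)$. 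The remaining chain $x/(4\ln x) \sim \pi(x)/4 \sim \pi_R(x)/2$ is then just \eqref{29} divided by $2$, together with the prime number theorem.

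The main obstacle is the first step: the bare bounds \eqref{a} of \autoref{t5} do not pin down $R'_n$ tightly enough, so one cannot avoid re-deriving the upper estimate $R'_n < (4+\varepsilon)n\ln n$ directly from Shevelev's asymptotic \eqref{29} in the style of Sondow's Theorem 2. Once that is done, everything else is the standard soft passage between a sparse increasing sequence with known growth rate and its counting function, and one should take care only that the error terms in \eqref{29} are genuinely $o$ of the main term uniformly enough to justify the inversion — which they are, since \eqref{29} is an honest asymptotic equivalence.
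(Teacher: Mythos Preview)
Your proposal is correct and is exactly what the paper intends: the paper gives no proof at all beyond the sentence ``Combining \eqref{29} with Sondow's method in \cite{15} it is easy to see the following corollary,'' and your write-up is precisely that combination --- run Sondow's argument with $\pi$ replaced by $\pi_R$, using Shevelev's asymptotic $\pi_R(x)\sim x/(2\ln x)$ to get $\pi_R(x)-\pi_R(x/2)\sim x/(4\ln x)$, and then invert. Your observation that the bare sandwich $R_{2n}\le R'_n<R_{3n}$ from \autoref{t5} is too loose (ratio $\to 3/2$) and that the genuine input is the Shevelev asymptotic is spot on; note also that for the lower bound you may, as you hint, simply quote $R'_n\ge R_{2n}\sim p_{4n}\sim 4n\ln n$ from \autoref{t5} rather than rederiving it.
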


\subsubsection*{5. The Number of Primes between $\left(1-\varepsilon\right)x$ and
$x$}

\vskip 10pt

In \cite[Theorem 2.2]{1} N.~Amersi, O.~Beckwith,
S.~J.~Miller, R.~Ronan and J.~Sondow proved that for $c\in\left(0,1\right)$ the number of primes in
the interval $\left(cx,x\right)$ tends to infinity as $x\rightarrow\infty$. We will give another proof of this theorem.

$\hphantom{}$

\begin{thm}\label{t6}
For  any fixed $\epsilon>0$, the number of primes between $\left(1-\varepsilon\right)x$
and~$x$ tends to infinity as $x\rightarrow\infty$.
\end{thm}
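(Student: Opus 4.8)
The plan is to deduce Theorem~\ref{t6} from the quantitative lower bound already obtained for $\pi(x)-\pi(x/2)$ in the course of proving \autoref{t2}, by a telescoping argument. Fix $\varepsilon\in(0,1)$. The key observation is that the interval $\big((1-\varepsilon)x,\,x\big]$ can be covered by finitely many ``halving'' intervals of the shape $\big(\tfrac{y}{2},\,y\big]$: choosing $m=m(\varepsilon)$ to be the least integer with $2^{-m}\le 1-\varepsilon$, we have
\[
\big((1-\varepsilon)x,\,x\big]\ \supseteq\ \bigcup_{j=0}^{m-1}\Big(\tfrac{x}{2^{\,j+1}},\,\tfrac{x}{2^{\,j}}\Big],
\]
so that
\[
\pi(x)-\pi\big((1-\varepsilon)x\big)\ \ge\ \sum_{j=0}^{m-1}\Big(\pi\big(\tfrac{x}{2^{\,j}}\big)-\pi\big(\tfrac{x}{2^{\,j+1}}\big)\Big).
\]

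First I would invoke Dusart's inequalities \eqref{8} exactly as in the proof of \autoref{t2} to get, for each fixed $j$ and all $x$ with $x/2^{\,j}\ge 599$,
\[
\pi\big(\tfrac{x}{2^{\,j}}\big)-\pi\big(\tfrac{x}{2^{\,j+1}}\big)\ \ge\ \frac{x}{2^{\,j}\ln\!\big(x/2^{\,j}\big)}\Big(1+\tfrac{1}{\ln(x/2^{\,j})}\Big)-\frac{x}{2^{\,j+1}\ln\!\big(x/2^{\,j+1}\big)}\Big(1+\tfrac{1.2762}{\ln(x/2^{\,j+1})}\Big),
\]
which, just as in \eqref{10}, is bounded below by $\dfrac{x}{2^{\,j}\ln x}\Big(\tfrac{1}{6}-\tfrac{c}{\ln x}\Big)$ for an absolute constant $c$ and all sufficiently large $x$. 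Summing over $j=0,\dots,m-1$ yields
\[
\pi(x)-\pi\big((1-\varepsilon)x\big)\ \ge\ \frac{x}{\ln x}\Big(\tfrac{1}{6}-\tfrac{c}{\ln x}\Big)\sum_{j=0}^{m-1}2^{-j}\ \ge\ \frac{x}{6\ln x}\Big(1-\tfrac{6c}{\ln x}\Big),
\]
the geometric sum being at least $1$. The right-hand side is a monotone increasing function of $x$ tending to infinity, which is exactly the claim. Finally I would note that the finitely many values of $x$ with $x<599\cdot 2^{\,m-1}$ (where the Dusart bound on the smallest sub-interval is not yet available) are handled trivially, since for such $x$ the quantity $\pi(x)-\pi((1-\varepsilon)x)$ need only be checked to be nonnegative and, more to the point, the statement is an asymptotic one and only concerns $x\to\infty$.

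The main obstacle is bookkeeping rather than anything deep: one must make sure the constant $c$ and the threshold in $x$ can be chosen uniformly in $j$ over the range $0\le j\le m-1$, and that the loss incurred by the largest shifted interval $\big(x/2^{\,m},\,x/2^{\,m-1}\big]$ (where $\ln(x/2^{\,m-1})$ is smallest) does not swallow the positive main term; this forces the threshold on $x$ to grow with $m$, i.e.\ with $\varepsilon$, but since $\varepsilon$ is fixed this is harmless. An equally clean alternative, which avoids even this mild uniformity issue, is to apply \autoref{t2}'s conclusion directly: there are at least $\pi_R(x)-\pi_R(x/2)$ Ramanujan primes, hence at least that many primes, in $\big(x/2,x\big]\subseteq\big((1-\varepsilon)x,x\big]$ once $1-\varepsilon\le\tfrac12$, and for general $\varepsilon$ one telescopes $\pi_R$ over the same dyadic decomposition and uses that each $\pi_R(x/2^{\,j})-\pi_R(x/2^{\,j+1})\to\infty$ by \autoref{t2}. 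Either way the proof reduces to the single inequality already established, applied on a bounded number of rescaled copies.
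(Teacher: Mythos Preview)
Your dyadic decomposition goes the wrong way. With $m$ the least integer satisfying $2^{-m}\le 1-\varepsilon$ you have $x/2^{m}\le(1-\varepsilon)x$, so
\[
\bigcup_{j=0}^{m-1}\Bigl(\tfrac{x}{2^{j+1}},\tfrac{x}{2^{j}}\Bigr]=\Bigl(\tfrac{x}{2^{m}},x\Bigr]\ \supseteq\ \bigl((1-\varepsilon)x,x\bigr],
\]
the opposite of what you wrote; the resulting inequality is therefore an upper bound for $\pi(x)-\pi((1-\varepsilon)x)$, not a lower one. Switching to the largest $m$ with $2^{-m}\ge 1-\varepsilon$ does give the containment you need, but then for every $\varepsilon<\tfrac12$ one is forced to $m=0$ and the union is empty. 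This is a structural obstruction, not a bookkeeping issue: the interesting case is precisely $\varepsilon$ small, where $((1-\varepsilon)x,x]$ is \emph{shorter} than $(x/2,x]$, so no interval of the shape $(y/2,y]$ with right endpoint $x$ fits inside it and there is nothing to telescope. The alternative route via $\pi_{R}$ in your last paragraph has the identical defect.

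The paper argues differently. It parametrises $x$ by the Ramanujan index, writing $R_{n}\le x<R_{n+1}$, and reduces the claim to showing that $\pi(R_{n})-\pi\bigl((1-\varepsilon)R_{n+1}\bigr)\to\infty$. For this it combines \autoref{c1} (giving $\pi(R_{n})>2n$) with Sondow's bound $R_{n+1}<(2+\varepsilon)(n+1)\ln(n+1)$ and Dusart's upper estimate \eqref{12} to obtain $\pi\bigl((1-\varepsilon)R_{n+1}\bigr)\lesssim(2-\varepsilon-\varepsilon^{2})n$; the difference is then a positive multiple of $n$. Incidentally, the same Dusart bounds you quote would yield the theorem in one stroke by estimating $\pi(x)-\pi((1-\varepsilon)x)$ directly (main term $\varepsilon x/\ln x$); what cannot be made to work is routing the estimate through intervals of fixed ratio $1:2$.
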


\begin{proof} Let $R_{n+1}>x\geq R_{n}$ and therefore $\pi\left(x\right)-\pi\left(\frac{x}{2}\right)\geq n$.
The number of primes between \textit{$\left(1-\varepsilon\right)x$}
and $x$ tends to infinity as $x\rightarrow\infty$ if and only if $\pi\left(\left(1-\varepsilon\right)x\right)-\pi\left(\frac{x}{2}\right)<n-f\left(n\right)$
where $f\left(n\right)$ is a steadily increasing function. But as
\begin{equation}
\pi\left(\left(1-\varepsilon\right)x\right)-\pi\left(\frac{x}{2}\right)<\pi\left(\left(1-\varepsilon\right)R_{n+1}\right)-\pi\left(\frac{R_{n}}{2}\right)\label{30}
\end{equation}
 holds, it is enough to show that 
\begin{equation}
\pi\left(\left(1-\varepsilon\right)R_{n+1}\right)-\pi\left(\frac{R_{n}}{2}\right)<n-f\left(n\right),\label{31}
\end{equation}
or by the equality $n=\pi\left(R_{n}\right)-\pi\left(\frac{R_{n}}{2}\right)$
to prove that $f\left(x\right)$ is not greater than $\pi\left(R_{n}\right)-\pi\left(\left(1-\varepsilon\right)R_{n+1}\right)$.
By Sondow's Theorem \cite{15} we know that for all $\varepsilon>0$
there exists $N\left(\varepsilon\right)$ such that the inequalities
\begin{equation}
\left(2+\varepsilon\right)n\ln n>R_{n}>p_{2n}\label{32}
\end{equation}
 hold for $n>N\left(\varepsilon\right)$. Hence by \autoref{c1} and \eqref{12}
\begin{equation}
\pi\left(R_{n}\right)-\pi\left(\left(1-\varepsilon\right)R_{n+1}\right)>2n-\frac{\left(2-\varepsilon-\varepsilon^{2}\right)\left(n+1\right)\ln\left(n+1\right)}{\ln\left(\left(2-\varepsilon-\varepsilon^{2}\right)\left(n+1\right)\ln\left(n+1\right)\right)-1}\label{33}
\end{equation}
 holds. We can set $f\left(n\right)$ equal to the right side of the inequality
because it tends to infinity as $n\rightarrow\infty$ .
\end{proof}

\subsection*{Acknowledgements}

I would like to thank Dr. Kürşat Aker for his encouragement, helpful counsels and all his efforts. I would also like to especially thank Dr. Jonathan Sondow for his precious comments, reviewing drafts of this paper and most necessary corrections.

\end{document}